\theoremstyle{plain}
\newtheorem{theorem}{Theorem}[section]
\newtheorem{lemma}[theorem]{Lemma}
\newtheorem{remark}[theorem]{Remark}
\newtheorem{corollary}[theorem]{Corollary}
\newtheorem{assumption}[theorem]{Assumption}
\newcommand{\e}{\mathbf{E}}
\newcommand{\h}{\mathbf{H}}
\newcommand{\z}{\mathbf{Z}}
\newcommand{\zz}[1]{\left. \mathbf{Z} \right|_{s={#1}}}
\newcommand{\dx}{\mathrm{d}\mathbf{x}}
\renewcommand{\d}{\mathrm{d}}
\newcommand{\g}{\mathbf{g}}
\newcommand{\n}{\boldsymbol{\nu}}
\newcommand{\m}{\mathbf{m}}
\newcommand{\x}{\mathbf{x}}
\newcommand{\opA}{\mathcal{B}}
\renewcommand{\a}{\mathfrak{a}}
\renewcommand{\Re}{\operatorname{Re}}
\newcommand{\DIV}{\mathop{\operatorname{div}}}
\newcommand{\CURL}{\mathop{\operatorname{curl}}}
\numberwithin{equation}{section}
\begin{document}

\title{Global Well-Posedness and Exponential Stability for \\
Heterogeneous Anisotropic Maxwell's Equations under a Nonlinear Boundary Feedback with Delay}

\author{Andrii Anikushyn\thanks{Department of Computer Sciences and Cybernetics, Taras Shevcheno National University of Kyiv \hfill \texttt{anik\_andrii@univ.kiev.ua}} \and
Michael Pokojovy\thanks{Department of Mathematical Sciences, The University of Texas at El Paso, El Paso, TX \hfill \texttt{mpokojovy@utep.edu}}}

\date{\today}

\maketitle

\begin{abstract}
    We consider an initial-boundary value problem for the Maxwell's system in a bounded domain
    with a linear inhomogeneous anisotropic instantaneous material law
    subject to a nonlinear Silver--M\"uller-type boundary feedback mechanism
    incorporating both an instantaneous damping and a time-localized delay effect.
    By proving the maximal monotonicity property of the underlying nonlinear generator,
    we establish the global well-posedness in an appropriate Hilbert space.
    Further, under suitable assumptions and geometric conditions, we show the system is exponentially stable.
\end{abstract}

\begin{center}
\begin{tabular}{p{1.0in}p{5.0in}}
    \textbf{Key words:} &  Maxwell's equations, nonlinear boundary feedback, instantaneous damping, time-localized delay, well-posedness, exponential stability  \\
    \textbf{MSC (2010):} & Primary
    35Q61, 
    35L50, 
    35L60, 
    35B40, 
    39B99  
    \\
    &
    Secondary
    35A01, 
    35A02, 
    35B37, 
    93C20, 
    93C23  
\end{tabular}
\end{center}

\section{Introduction}
Consider the macroscopic formulation of Maxwell's equations in a bounded domain $G \subset \mathbb{R}^{3}$
with $\boldsymbol{\nu} \colon \Gamma \to \mathbb{R}^{3}$ standing for the outer normal vector to its smooth boundary $\Gamma := \partial G$
and the functions $\mathbf{E}, \mathbf{D}, \mathbf{H}, \mathbf{B} \colon [0, \infty) \times G \to \mathbb{R}^{3}$
denoting the electric, displacement, magnetic and magnetizing fields, respectively.
With $\rho \colon [0, \infty) \times G \to \mathbb{R}$ representing the electric charge density,
Gauss' law along with Gauss' law for magnetism yield
\begin{equation}
    \DIV \mathbf{D} = \rho \quad \text{ and } \quad \DIV \mathbf{B} = 0 \quad \text{ in } (0, \infty) \times G,
    \label{EQUATION_GAUSS_LAWS}
\end{equation}
while Faraday's law of induction and Amp\`{e}re's circuital law mandate
\begin{equation}
    \partial_{t} \mathbf{D} = \CURL \mathbf{H} - \mathbf{J} \quad \text{ and } \quad
    \partial_{t} \mathbf{B} = -\CURL \mathbf{E} \quad \text{ in } (0, \infty) \times G.
    \label{EQUATION_FARADAY_AND_AMPERE_LAWS}
\end{equation}
Typically, $\mathbf{J} \colon [0, \infty) \times G \to \mathbb{R}^{3}$ is a (given) total current density.

Since the system (\ref{EQUATION_GAUSS_LAWS})--(\ref{EQUATION_FARADAY_AND_AMPERE_LAWS}) is underdetermined,
two more equations relating the four unknown vector fields $\mathbf{E}, \mathbf{D}, \mathbf{H}, \mathbf{B}$ need to be postulated.
Letting $\boldsymbol{\varepsilon}, \boldsymbol{\mu} \colon G \to \mathbb{R}^{3 \times 3}$ be symmetric, uniformly positive definite
matrix-valued permittivity and permeability tensor fields,
the instantaneous anisotropic material laws read as
\begin{equation}
    \mathbf{D} = \boldsymbol{\varepsilon} \mathbf{E} \quad \text{ and } \quad
    \mathbf{B} = \boldsymbol{\mu} \mathbf{H}.
    \label{EQUATION_MATERIAL_LAWS}
\end{equation}

Combining Equations (\ref{EQUATION_GAUSS_LAWS})--(\ref{EQUATION_MATERIAL_LAWS}), we arrive at
\begin{align}
    \partial_{t} \big(\boldsymbol{\varepsilon} \mathbf{E}\big) &= \phantom{-}\CURL \mathbf{H} - \mathbf{J}, &
    \DIV \big(\boldsymbol{\varepsilon} \mathbf{E}\big) &= \rho & \text{ in } (0, \infty) \times G,
    \label{EQUATION_MAXWELL_GENERAL_1} \\
    \partial_{t} \big(\boldsymbol{\mu} \mathbf{H}\big) &= -\CURL \mathbf{E}, &
    \DIV \big(\boldsymbol{\mu} \mathbf{H}\big) &= 0 & \text{ in } (0, \infty) \times G.
    \label{EQUATION_MAXWELL_GENERAL_2}
\end{align}

Various boundary conditions for Equations (\ref{EQUATION_MAXWELL_GENERAL_1})--(\ref{EQUATION_MAXWELL_GENERAL_2}) are known in the literature.
Eller {\it et al.} \cite{ElLaNi2002} considered the nonlinear version
\begin{equation}
    \mathbf{H} \times \boldsymbol{\nu} + \mathbf{g}(\mathbf{E} \times \boldsymbol{\nu}) \times \boldsymbol{\nu} = \mathbf{0}
    \quad \text{ on } (0, \infty) \times \Gamma
    \label{EQUATION_NONLINEAR_SILVER_MUELLER_BC}
\end{equation}
of the classical Silver--M\"uller boundary condition
\begin{equation}
    \mathbf{H} \times \boldsymbol{\nu} + \kappa \cdot (\mathbf{E} \times \boldsymbol{\nu}) \times \boldsymbol{\nu} = \mathbf{0}
    \quad \text{ on } (0, \infty) \times \Gamma.
    \label{EQUATION_LINEAR_SILVER_MUELLER_BC}
\end{equation}
Here, $\mathbf{g} \colon \mathbb{R}^{3} \to \mathbb{R}^{3}$ is a smooth function with $\mathbf{g}(\mathbf{0}) = \mathbf{0}$ and $\kappa > 0$ is a constant.
Equations (\ref{EQUATION_NONLINEAR_SILVER_MUELLER_BC}) and (\ref{EQUATION_LINEAR_SILVER_MUELLER_BC}) model scattering of electromagnetic waves by an obstacle $G$
under the assumption that the waves cannot penetrate the obstacle too deeply \cite[p. 20]{CaCoMo2011}.
The Silver--M\"uller boundary condition (\ref{EQUATION_LINEAR_SILVER_MUELLER_BC})
arises as a first-order approximation to the so-called transparent boundary condition
but, despite of being dissipative, allows for reflections back into the domain $G$ \cite[p. 136]{ElLaNi2002.2}.

In the present paper, we modify the nonlinear feedback-type boundary condition (\ref{EQUATION_NONLINEAR_SILVER_MUELLER_BC})
by incorporating a nonlinear time-localized delay effect:
\begin{equation}
    \mathbf{H}(t, \cdot) \times \boldsymbol{\nu}
    + \gamma_{1} \mathbf{g}\big(\mathbf{E}(t, \cdot) \times \boldsymbol{\nu}) \times \boldsymbol{\nu}
    + \gamma_{2} \mathbf{g}\big(\mathbf{E}(t - \tau, \cdot) \times \boldsymbol{\nu}) \times \boldsymbol{\nu} = \mathbf{0}
    \quad \text{ on } (0, \infty) \times \Gamma
    \label{EQUATION_NONLINEAR_SILVER_MUELLER_BC_WITH_DELAY}
\end{equation}
with a delay parameter $\tau > 0$ and appropriate constants $\gamma_{1}, \gamma_{2} > 0$.
Viewing the instantaneous Silver--M\"uller boundary conditions (\ref{EQUATION_NONLINEAR_SILVER_MUELLER_BC}) and (\ref{EQUATION_LINEAR_SILVER_MUELLER_BC})
as a feedback boundary control, the latter being a common stabilization instrument widely used in engineering,
an extra delay term in Equation (\ref{EQUATION_NONLINEAR_SILVER_MUELLER_BC_WITH_DELAY}) becomes indispensable
to adequately account for time retardations, which inevitably arise due a time lag in the interaction between a sensor measuring $\mathbf{E} \times \boldsymbol{\nu}$
and the actuator updating $\mathbf{H} \times \boldsymbol{\nu}$ on the boundary $\Gamma$.

Pulling Equations (\ref{EQUATION_MAXWELL_GENERAL_1})--(\ref{EQUATION_MAXWELL_GENERAL_2}),
(\ref{EQUATION_NONLINEAR_SILVER_MUELLER_BC_WITH_DELAY}) together, we arrive at
\begin{align*}
    \partial_{t} \big(\boldsymbol{\varepsilon} \mathbf{E}\big) = \CURL \mathbf{H} - \mathbf{J}, \quad
    \DIV \big(\boldsymbol{\varepsilon} \mathbf{E}\big) &= \rho \quad \text{ in } (0, \infty) \times G, \\
    \partial_{t} \big(\boldsymbol{\mu} \mathbf{H}\big) = -\CURL \mathbf{E}, \quad
    \DIV \big(\boldsymbol{\mu} \mathbf{H}\big) &= 0 \quad \text{ in } (0, \infty) \times G, \\
    \mathbf{H}(t, \cdot) \times \boldsymbol{\nu}
    + \gamma_{1} \mathbf{g}\big(\mathbf{E}(t, \cdot) \times \boldsymbol{\nu}) \times \boldsymbol{\nu}
    + \gamma_{2} \mathbf{g}\big(\mathbf{E}(t - \tau, \cdot) \times \boldsymbol{\nu}) \times \boldsymbol{\nu} &= \mathbf{0}
    \quad \text{ on } (0, \infty) \times \Gamma.
\end{align*}
In the following, let $\mathbf{J} \equiv 0$ and $\rho \equiv 0$.
This corresponds to the case both electrical sources and resistance effects are absent.
While not affecting the well-posedness results to follow, compared to the case of electrical resistance, i.e., $\mathbf{J} = \boldsymbol{\sigma}(\mathbf{E}, \mathbf{H}) \mathbf{E}$ as mandated by the Ohm's law,
the condition $\mathbf{J} \equiv 0$ reduces the overall amount of damping in the system making the stability analysis more challenging.
Adding the usual initial conditions, we arrive at the system
\begin{align}
    \partial_{t} \big(\boldsymbol{\varepsilon} \mathbf{E}\big) = \phantom{-}\CURL \mathbf{H}, \quad
    \DIV \big(\boldsymbol{\varepsilon} \mathbf{E}\big) &= 0 \quad \text{ in } (0, \infty) \times G,
    \label{EQUATION_MAXWELL_1} \\
    \partial_{t} \big(\boldsymbol{\mu} \mathbf{H}\big) = -\CURL \mathbf{E}, \quad
    \DIV \big(\boldsymbol{\mu} \mathbf{H}\big) &= 0 \quad \text{ in } (0, \infty) \times G,
    \label{EQUATION_MAXWELL_2} \\
    \mathbf{H}(t, \cdot) \times \boldsymbol{\nu}
    + \gamma_{1} \mathbf{g}\big(\mathbf{E}(t, \cdot) \times \boldsymbol{\nu}) \times \boldsymbol{\nu}
    + \gamma_{2} \mathbf{g}\big(\mathbf{E}(t - \tau, \cdot) \times \boldsymbol{\nu}) \times \boldsymbol{\nu} &= \mathbf{0}
    \quad \text{ on } (0, \infty) \times \Gamma,
    \label{EQUATION_MAXWELL_BC_1} \\
    \mathbf{E}(0, \cdot) = \mathbf{E}^{0}, \quad
    \mathbf{H}(0, \cdot) &= \mathbf{H}^{0} \quad \text{ in } G,
    \label{EQUATION_MAXWELL_IC} \\
    \mathbf{E}(-\tau \cdot, \cdot) \times \boldsymbol{\nu} &= \boldsymbol{\Phi}^{0} \quad \text{ in } (0, 1) \times \Gamma.
    \label{EQUATION_MAXWELL_IC_HISTORY}
\end{align}

Partial (not mentioning ordinary!) differential equations (PDEs) have widely been studied in the literature.
Time-delays along with other types of time-nonlocalities such as memory effects, etc.,
can typically enter a PDE in one of the two ways -- either through a time-nonlocal material law \cite{Jo2003, KhuPoRa2015}
or a time-delayed feedback mechanism (so-called ``closed-loop control'') \cite{Da1997, DaLaPo1986, NiPo2005, NiPi2006.SIAM, ZhaSte2017}, etc.
Whereas time-delayed material laws mostly lead to ill-posedness \cite{KhuPoRa2015},
the effect of time-delay in feedback mechanisms can range from a ``mere'' reduction of the decay rate to destabilization to even ill-posedness.
We refer the reader to the famous Datko's example \cite{Da1997}, which illustrates the later dichotomy.
Our goal is to investigate the impact of the nonlinear boundary delay feedback from Equation (\ref{EQUATION_MAXWELL_BC_1})
on system (\ref{EQUATION_MAXWELL_1})--(\ref{EQUATION_MAXWELL_IC_HISTORY}).
Before proceeding with our study, we first give a short literature review.
In our brief review below, we restrict ourselves to instantaneous material laws but discuss both instantaneous and nonlocal boundary conditions.


Lagnese \cite{La1989} studied the exact boundary controllability of homogeneous isotropic Maxwell's equations
\begin{align}
    \label{INTRO_LA_SYSTEM}
    \partial_{t} \e - \CURL( \varepsilon^{-1} \h) = \mathbf{0}, \quad  \partial_{t} \h + \CURL(\mu^{-1} \e) &= \mathbf{0} \text{ in } (0, \infty) \times G, \\
    \DIV \e = \DIV \h &= \mathbf{0} \text{ in } (0, \infty) \times G, \\
    \label{INTRO_LA_IC}
    \e(0) = \e^0, \quad \h(0) &= \h^0 \text{ in } G
\end{align}
subject to boundary condition
\begin{equation}
    \label{INTRO_LA_BC}
    \n \times \h = - \mathbf{J} \text{ on } \Gamma \times (0, \infty)
\end{equation}
in star-shaped regions $G$.
Here, the current density $\mathbf{J}$ plays the role of a distributed open-loop control.
The electric permittivity $\varepsilon$ and magnetic permeability $\mu$ were assumed constant,
while the region $G$ was selected to be star-shaped with respect to some point.

Nicaise \cite{Ni2000} investigated the exact controllability of isotropic non-homogeneous Maxwell's equations (\ref{EQUATION_MAXWELL_1})--(\ref{EQUATION_MAXWELL_2}) with the boundary conditions
\begin{align*}
    \h \times \n &= \mathbf{J} \text{ on } (0, T) \times \Gamma_0, \\
    \h \times \n &= \mathbf{0} \text{ on } (0, T) \times (\Gamma \setminus \Gamma_0),  \\
    \e \times \n &= \mathbf{0} \text{ on } (0, T) \times \Gamma
\end{align*}
via a boundary control $\mathbf{J}$ under appropriate conditions on the coefficients and the geometry of $G$. 
Here, $\Gamma_0$ is a non-empty, relatively open subset of $\Gamma$.

Eller \& Masters \cite{ElMa2002} later used multiplier techniques to prove the exact controllability for Equations (\ref{INTRO_LA_SYSTEM})--(\ref{INTRO_LA_IC}) via of the boundary control
\begin{align*}
    \n \times (\varepsilon^{-1} \h) = \mathbf{J} \text{ on } (0, T) \times \Gamma
\end{align*}
for nonhomogeneous $\mu, \varepsilon$ in connected domains $G$.

Krigman \cite{Kri2007} studied a similar problem for the system
\begin{align*}
    \varepsilon \partial_{t} \e - \CURL( \h) + \sigma \e = \mathbf{0}, \quad \mu \partial_{t} \h + \CURL(\e) &= \mathbf{0} \text{ in } (0,\infty) \times G \\
    \DIV (\boldsymbol{\varepsilon} \e) = 0, \quad \DIV (\boldsymbol{\mu} \h) &= 0 \text{ in } (0,\infty) \times G
\end{align*}
with the initial conditions (\ref{INTRO_LA_IC}) and boundary condition (\ref{INTRO_LA_BC}) in simply connected star-shaped domains $G$.

Eller \cite{El2007} studied Equations (\ref{EQUATION_MAXWELL_1})--(\ref{EQUATION_MAXWELL_2}) subject to boundary conditions
\begin{align*}
    \n\times \e = \mathbf{0}, \quad \n\cdot(\boldsymbol{\mu} \h) = 0 \text{ on } (0, T) \times \Gamma.
\end{align*}
Assuming the star-shapedness of $G$ and exploiting the method of multipliers, a boundary observability inequality was proved.


Eller {\it et al.} \cite{ElLaNi2002} examined the problem of stabilizing Maxwell's equations (\ref{EQUATION_MAXWELL_1})--(\ref{EQUATION_MAXWELL_2}) subject to boundary condition
\begin{align*}
    \h \times \n + g(\e \times \n) \times \n = \mathbf{0} \text{ on } (0, \infty) \times \Gamma.
\end{align*}
The (scalar) $\varepsilon$ and $\mu$ were assumed real positive fields and $g(\cdot)$ a continuous mapping satisfying certain monotonicity and boundness conditions.
To prove the well-posedness, monotone operator theory and nonlinear semigroup theory were used,
while the exponential stability -- both in the linear and the nonlinear cases -- was shown via exact controllability established using multiplier techniques.

Zhou \cite{Zh1997} investigated the exact controllability under the action of a distributed control $\mathbf{u}$
\begin{align*}
    \partial_{t} \mathbf{E} - \CURL (\mathbf{H}) &= \chi_G(\x) \mathbf{u} \text{ in } (0, \infty) \times G, \\
    \partial_{t} \mathbf{H} + \CURL ( \mathbf{E}) &= \mathbf{0} \text{ in } (0,\infty) \times G, \\
    \DIV \mathbf{H} = \DIV \mathbf{E} &= 0 \text{ in } (0, \infty) \times G, \\
    \mathbf{E} \times \n &= \mathbf{0} \text{ on } (0, \infty) \times G, \\
    \mathbf{E}(0) = \mathbf{E}^0, \quad \mathbf{H}(0) &= \mathbf{H}^0 \text{ in } G,
\end{align*}
where $\chi_G(\cdot)$ is the indicator function of a set $\omega \subset G$.
This result was further extended by Zhang \cite{Zha2000} to time-dependent $\omega$'s using multiplier techniques.

A series of important results were obtained by Nicaise \& Pignotti.
In \cite{NiPi2003}, under monotonicity and boundedness assumptions on $g(\cdot)$, the authors considered a stabilization problem for Maxwell's equations
\begin{align}
    \label{INTRO_NP_SYSTEM_1}
    \partial_{t} \mathbf{E} - \CURL (\lambda \mathbf{H}) &= \mathbf{0} \text{ in } (0, \infty) \times G, \\
    \label{INTRO_NP_SYSTEM_2}
    \partial_{t} \mathbf{H} + \CURL (\boldsymbol{\mu} \mathbf{E}) &= \mathbf{0} \text{ in } (0, \infty) \times G, \\
    \label{INTRO_NP_SYSTEM_3}
    \DIV \mathbf{E} = \DIV \mathbf{H} &= 0 \text{ in } (0, \infty) \times G, \\
    \label{INTRO_NP_SYSTEM_4}
    \mathbf{E}(0) = \mathbf{E}^0, \quad \mathbf{H}(0) &=  \mathbf{H}^0 \text{ in } G
\end{align}
with space-time variable (scalar) coefficients $\boldsymbol{\mu} = \boldsymbol{\mu}(\x, t)$, $\lambda = \lambda(\x, t)$  and a nonlinear Silver--M\"{u}ller boundary condition
\begin{align*}
    g(\mathbf{x}, \mathbf{E} \times \n) + \mathbf{H} \times \n = \mathbf{0} \text{ on } (0, \infty) \times \Gamma.
\end{align*}
Another article \cite{NiPi2004} by the same authors was dedicated to the problem of stabilization of Maxwell's equations via a distributed feedback arising from the linear Ohm's law:
\begin{align}
    \label{INTRO_NP_SYSTEM_7}
    \partial_{t} \mathbf{E} - \CURL (\lambda \mathbf{H}) + \sigma \mathbf{E} &= \mathbf{0} \text{ in } (0, \infty) \times G, \\
    \label{INTRO_NP_SYSTEM_8}
    \partial_{t} \mathbf{H} + \CURL (\boldsymbol{\mu} \mathbf{E}) &= \mathbf{0} \text{ in } (0, \infty) \times G, \\
    \label{INTRO_NP_SYSTEM_9}
    \DIV \mathbf{H} &= 0 \text{ in } (0, \infty) \times G, \\
    \label{INTRO_NP_SYSTEM_5}
    \mathbf{E} \times \n = \mathbf{0}, \quad \mathbf{H} \cdot \n &= 0 \text{ on } (0, \infty) \times \Gamma, \\
    \label{INTRO_NP_SYSTEM_6}
    \mathbf{E}(0) = \e^0, \quad \mathbf{H}(0) &= \h^0 \text{ in } G.
\end{align}
The method of multipliers was used to establish an observability estimate in the paper.
Same authors \cite{NiPi2006.AMO} also obtained an observability estimate for the standard isotropic homogeneous Maxwell's system (\ref{INTRO_NP_SYSTEM_1})--(\ref{INTRO_NP_SYSTEM_4})
subject to boundary conditions (\ref{INTRO_NP_SYSTEM_5}).

The impact of boundary conditions that include tangetial components were studied by numerous authors.
Kapitonov \cite{Ka1994} considered Equations (\ref{INTRO_NP_SYSTEM_1})--(\ref{INTRO_NP_SYSTEM_4}) in $(0, T) \times G$ with dissipative boundary conditions
\begin{align*}
    \n \times \e - \alpha(\cdot) \h_{\tau} = \mathbf{0},
\end{align*}
where $\alpha(\cdot)$ is a continuously differentiable function on $\Gamma$ with $\Re \alpha > 0$.
Here and in the sequel,
\begin{equation}
    \label{EQUATION_TANGENTIAL_COMPONENT}
    \h_{\tau} := \h - (\h \times \n) \h
\end{equation}
denotes the tangential component of $\h$.
Using the semigroup approach to investigate the well-posedness, the author further utilized geometrical properties of the domain
to obtain results on exact boundary controllability of the solution to (\ref{INTRO_NP_SYSTEM_1})--(\ref{INTRO_NP_SYSTEM_4}) in $(0, T) \times G$ with boundary condition
\begin{align*}
    \n \times \e - \mathrm{i} a(\x) \h_{\tau} |_{\Gamma} = \mathbf{p}(t, \x),
\end{align*}
where $a(\x)$ is a continuously differentiable scalar function on $\Gamma$.
Cagnol and Eller \cite{CaEl2011} studied a well-posedness for anisotropic Maxwell's equations with the so-called ``absorbing boundary'' condition
\begin{align*}
    \n \times \e - \alpha \h_{\tau} = \mathbf{g} \text{ on } (0, T) \times \Gamma.
\end{align*}

Nonlocal boundary conditions are also known in the literature.
Nibbi \& Polidoro \cite{NiPo2005} proved the exponential stability of `Graffi'-type free energy associated with the isotropic Maxwell's equations subject to a memory-type boundary condition
\begin{align*}
    \e_{\tau}(t, \cdot) = \eta_0 \h(t, \cdot) \times \n + \int_0^\infty \eta(s) \h(t - s, \cdot) \times \n\ \d s.
\end{align*}
In contrast, the impact of time-delayed boundary conditions from Equation (\ref{EQUATION_MAXWELL_BC_1}) on Maxwell's equations has not been studied in the literature before.
At the same time, such boundary conditions proved to be very interesting -- both from theoretical in practical point of view -- for other types of hyperbolic systems.
For example, Nicaise \& Pignotti \cite{NiPi2006.SIAM} investigated the stability of a delay wave equation subject to a time-delayed boundary feedback
\begin{align*}
    u_{tt}(t, \x)  - \Delta u(t, \x) = 0 &\text{ for } (t, \x) \in (0, \infty) \times G, \\
    u(t, \x) = 0 &\text{ for } (t, \x) \in (0, \infty) \times \Gamma_{0}, \\
    \partial_{\n} u(t, \x) = -\mu_1 u_t(t, \x) - \mu_2 u(t - \tau, \x) &\text{ for } (t, \x) \in (0, \infty) \times (\Gamma \setminus \Gamma_0), \\
    u(0, \x) = u_0(\x),\ \partial_{t} u(0, \x) = u_1(\x) &\text{ for } \x \in G, \\
    u_t(t-\tau, \x) = f_0(t-\tau, \x) &\text{ for } (t, \x) \in (0, \tau) \times (\Gamma \setminus \Gamma_0).
\end{align*}
Under suitable conditions on $\Gamma_0$, the initial-boundary-value problem was shown to possess a unique strong solution,
which is exponentially stable given $\mu_2 < \mu_1$.

Nicaise \& Pignotti \cite{NiPi2007} studied Equations (\ref{EQUATION_MAXWELL_1})--(\ref{EQUATION_MAXWELL_IC_HISTORY})
subject to the linear feedback $g(\mathbf{x}) = \mathbf{x}$ in the boundary condition (\ref{EQUATION_MAXWELL_BC_1})
and a similar system subject to an internal delay feedack
\begin{align}
    \partial_{t} \big(\boldsymbol{\varepsilon} \mathbf{E}\big) - \CURL \mathbf{H} + \sigma\big(\mu_{1} \mathbf{E} + \mu_{2} \mathbf{E}(\cdot - \tau, \cdot)\big) &= 0, &
    \DIV \big(\boldsymbol{\varepsilon} \mathbf{E}\big) &= 0 \quad \text{ in } (0, \infty) \times G
\end{align}
in lieu of Equation (\ref{EQUATION_MAXWELL_1}) with homogeneous, isotropic $\boldsymbol{\varepsilon}$, $\boldsymbol{\mu}$.
Having established a well-posedness theory for both systems for the case $\mu_{2} \leq \mu_{1}$,
exponential stability in simply connected bounded domains for $\mu_{2} < \mu_{1}$ was proved
and optimality of the latter condition was illustrated for both systems.

The rest of the paper has the following outline.
In Section \ref{SECTION_WELL_POSEDNESS}, partial difference-differential Equations (\ref{EQUATION_MAXWELL_1})--(\ref{EQUATION_MAXWELL_IC_HISTORY}) are transformed
to an abstact nonlinear evolution equation on the extended phase space.
By showing maximal monotonicity of the generator and exploiting the nonlinear semigroup theory, the well-posedness is proved.
In Section \ref{SECTION_EXPONENTIAL_STABILITY}, under a star-shapedness assumption on the domain $G$,
the exponential stability of the system is shown by using standard Rellich's multipliers and auxiliary functions inspired by \cite{KhuPoRa2015}.
In the Appendix section, for the sake of completeness, a ``folklore'' method (which probably goes back to early works of I.~Lasiecka)
that establishes a connection between disipativity, an observability-through-damping inequality and exponential stability is formulated and proved.


\section{Well-Posedness}
\label{SECTION_WELL_POSEDNESS}
Following \cite{El2007}, for a symmetric, positive definite matrix-valued $\boldsymbol{\alpha} \in L^{\infty}(G, \mathbb{R}^{3 \times 3})$, we define the spaces
\begin{align*}
    H(\CURL, G) &:= \Big\{\mathbf{u} \in \big(L^{2}(G)\big)^{3} \,\big|\, \CURL \mathbf{u} \in \big(L^{2}(G)\big)^{3}\Big\}, \\
    H(\operatorname{div}_{\boldsymbol{\alpha}} 0, G) &:=
    \Big\{\mathbf{u} \in \big(L^{2}(G)\big)^{3} \,\big|\, \DIV (\boldsymbol{\alpha}\mathbf{u}) = 0\Big\} \notag
\end{align*}
and introduce the Hilbert space
\begin{equation}
    \mathcal{H} := H(\operatorname{div}_{\boldsymbol{\varepsilon}} 0, G) \times H(\operatorname{div}_{\boldsymbol{\mu}} 0, G) \notag
\end{equation}
endowed with the inner product
\begin{equation}
    \big\langle (\mathbf{E}, \mathbf{H})^{T}, (\tilde{\mathbf{E}}, \tilde{\mathbf{H}})^{T}\big\rangle_{\mathcal{H}} :=
    \int_{G} \boldsymbol{\varepsilon} \mathbf{E} \cdot \tilde{\mathbf{E}} \,\mathrm{d}\mathbf{x} +
    \int_{G} \boldsymbol{\mu} \mathbf{H} \cdot \tilde{\mathbf{H}} \,\mathrm{d}\mathbf{x}. \notag
\end{equation}
(The completeness follows from \cite{La1989}).

Similar to \cite{ElLaNi2002}, we formally define the operator
\begin{equation}
    \mathcal{A} \colon
    \begin{pmatrix}
        \mathbf{E} \\
        \mathbf{H}
    \end{pmatrix} \mapsto
    \begin{pmatrix}
        -\boldsymbol{\varepsilon}^{-1} \CURL \mathbf{H} \\
        \phantom{-}\boldsymbol{\mu}^{-1} \CURL \mathbf{E}
    \end{pmatrix}.
    \notag
\end{equation}
Our goal is to transform Equations (\ref{EQUATION_MAXWELL_1})--(\ref{EQUATION_MAXWELL_IC_HISTORY})
to an abstract Cauchy problem on the extended phase space (cf. \cite{KhuPoRa2015, NiPi2006.SIAM, NiPi2007})
\begin{equation}
    \mathscr{H} := \mathcal{H} \times L^{2}\big(0, 1; L^{2}_{\tau}\big(\Gamma, \mathbb{R}^{3}\big)\big) \notag
\end{equation}
endowed with the scalar product
\begin{align*}
    \big\langle (\mathbf{E}, \mathbf{H}, \mathbf{Z})^{T}, (\tilde{\mathbf{E}}, \tilde{\mathbf{H}}, \tilde{\mathbf{Z}})^{T}\big\rangle_{\mathscr{H}}
    &:=
    \big\langle (\mathbf{E}, \mathbf{H})^{T}, (\tilde{\mathbf{E}}, \tilde{\mathbf{H}})^{T}\big\rangle_{\mathcal{H}}
    + \tau \int_{0}^{1} \int_{\Gamma} \big(\mathbf{Z}(s, \cdot) \times \boldsymbol{\nu}\big) \cdot
    \big(\tilde{\mathbf{Z}}(s, \cdot) \times \boldsymbol{\nu}\big) \mathrm{d}\mathbf{x} \mathbf{d}s.
\end{align*}
Here, following \cite[Equation (2.12)]{NiPi2007}, the `tangential' $L^{2}$-space on $\Gamma$ $L^{2}_{\tau}$ is defined as
\begin{equation*}
    L^{2}_{\tau}\big(\Gamma, \mathbb{R}^{3}\big) :=
    \big\{\boldsymbol{\Psi} \in L^{2}(\Gamma, \mathbb{R}^{3}) \,|\, \boldsymbol{\Psi} \cdot \boldsymbol{\nu} = 0 \text{ on } \Gamma\big\} .
\end{equation*}
This choice will later prove crucial for showing the density of the generator.

Letting formally
\begin{equation}
    \mathbf{V}(t, \cdot) :=
    \begin{pmatrix}
        \mathbf{E}(t, \cdot) \\
        \mathbf{H}(t, \cdot) \\
        (0, 1) \ni s \mapsto \mathbf{Z}(t, s, \cdot)
    \end{pmatrix}
    \equiv
    \begin{pmatrix}
        \mathbf{E}(t, \cdot) \\
        \mathbf{H}(t, \cdot) \\
        (0, 1) \ni s \mapsto \big(\mathbf{E}(t - \tau s) \times \boldsymbol{\nu}\big)|_{\Gamma})
    \end{pmatrix},
    \notag
\end{equation}
we define the operator
\begin{equation*}
    \mathscr{A} \colon D(\mathscr{A}) \subset \mathscr{H} \to \mathscr{H}, \quad
    (\mathbf{E}, \mathbf{H}, \mathbf{Z})^{T}
    \mapsto
    \begin{pmatrix}
        \mathcal{A} (\mathbf{E}, \mathbf{H})^{T} \\
        \frac{1}{\tau} \partial_{s} \mathbf{Z}
    \end{pmatrix}
\end{equation*}
with the domain
\begin{equation}
    \begin{split}
        D(\mathscr{A}) :=
        \Big\{(\mathbf{E}, \mathbf{H}, \mathbf{Z})^{T} \in \mathscr{H} \,\big|\,
        &\big(\mathcal{A} (\mathbf{E}, \mathbf{H})^{T}, \tfrac{1}{\tau} \partial_{s} \mathbf{Z}\big)^{T} \in \mathscr{H}, \quad
        \mathbf{E} \times \boldsymbol{\nu}|_{\Gamma}, \mathbf{H} \times \boldsymbol{\nu}|_{\Gamma} \in \big(L^{2}(\Gamma)\big)^{3}, \\
        &\mathbf{H} \times \boldsymbol{\nu} + \gamma_1 \mathbf{g}(\mathbf{E} \times \boldsymbol{\nu}) \times \boldsymbol{\nu} +
        \gamma_{2} \mathbf{g}\big(\mathbf{Z}|_{s = 1} \big) \times \boldsymbol{\nu} = \mathbf{0} \text{ on } \Gamma, \\
        &\mathbf{Z}|_{s = 0} = \mathbf{E} \times \boldsymbol{\nu}
        \Big\}.
    \end{split}
    \notag
\end{equation}
The latter explicitly reads as
\begin{equation}
    \begin{split}
        D(\mathscr{A}) :=
        \Big\{(\mathbf{E}, \mathbf{H}, \mathbf{Z})^{T} \in \mathscr{H} \,\big|\,
        &\mathbf{E}, \mathbf{H} \in H(\CURL, G), \quad \mathbf{Z} \in H^{1}\big(0, 1; \big(L^{2}(\Gamma)\big)^{3}\big), \\
        &\mathbf{E} \times \boldsymbol{\nu}|_{\Gamma}, \mathbf{H} \times \boldsymbol{\nu}|_{\Gamma} \in \big(L^{2}(\Gamma)\big)^{3}, \\
        &\mathbf{H} \times \boldsymbol{\nu} + \gamma_1 \mathbf{g}(\mathbf{E} \times \boldsymbol{\nu}) \times \boldsymbol{\nu} +
        \gamma_{2} \mathbf{g}\big(\mathbf{Z}|_{s = 1} \big) \times \boldsymbol{\nu} = \mathbf{0} \text{ on } \Gamma, \\
        &\mathbf{Z}|_{s = 0} = \mathbf{E} \times \boldsymbol{\nu}
        \Big\}.
    \end{split}
    \notag
\end{equation}

Equations (\ref{EQUATION_MAXWELL_1})--(\ref{EQUATION_MAXWELL_IC_HISTORY}) can equivalently be written as an abstract evolution equation
\begin{equation}
    \partial_{t} \mathbf{V}(t) + \mathscr{A}\big(\mathbf{V}(t)\big) = \mathbf{0} \text{ for } t > 0, \quad
    \mathbf{V}(0) = \mathbf{V}^{0}
    \label{EQUATION_ABSTRACT_CAUCHY_PROBLEM}
\end{equation}
with $\mathbf{V}^{0} := (\mathbf{E}^{0}, \mathbf{H}^{0}, \boldsymbol{\Phi}^{0})^{T}$.

\begin{assumption}[Tensor fields $\boldsymbol{\varepsilon}$ and $\boldsymbol{\mu}$]
    \label{ASSUMPTION_BASIC_CONDITIONS_COEFFICIENTS}

    Let $\boldsymbol{\varepsilon}, \boldsymbol{\mu} \in C^{0}\big(\bar{G}, \mathbb{R}^{3 \times 3}\big)$ satisfy
    \begin{equation}
        \big(\boldsymbol{\varepsilon}(\mathbf{x})\big)^{T} = \boldsymbol{\varepsilon}(\mathbf{x}) \text{ and }
        \big(\boldsymbol{\mu}(\mathbf{x})\big)^{T} = \boldsymbol{\mu}(\mathbf{x}) \text{ for } \mathbf{x} \in \bar{G}
    \end{equation}
    as well as
    \begin{equation}
        \lambda_{\min}(\boldsymbol{\varepsilon}) > 0 \text{ and }
        \lambda_{\min}(\boldsymbol{\mu}) > 0, \notag
    \end{equation}
    where
    \begin{equation}
        \lambda_{\min}(\boldsymbol{\varphi }) :=
        \min_{x \in \bar{G}} \min_{|\boldsymbol{\xi}| = 1} \boldsymbol{\xi} \cdot \big(\boldsymbol{\varphi}(\mathbf{x}) \boldsymbol{\xi}\big)
        \text{ for } \boldsymbol{\varphi } \in C^{0}\big(\bar{G}, \mathbb{R}^{3 \times 3}\big).
        \notag
    \end{equation}
\end{assumption}

Denote
\begin{equation}
    \label{ASSUMPTIONS_ALPHA}
    \alpha = \min \{ \lambda_{\min}(\boldsymbol{\varepsilon}) , \lambda_{\min}(\boldsymbol{\mu})\}.
\end{equation}

\begin{assumption}[Nonlinearity $\mathbf{g}(\cdot)$]
    \label{ASSUMPTION_NONLINEARITY_G}

    Suppose the nonlinear function $\mathbf{g} \colon \mathbb{R}^{3} \to \mathbb{R}^{3}$ satisfies:
    \begin{enumerate}
        \item $\mathbf{g}(\mathbf{0}) = \mathbf{0}$,

        \item There exists $c_1 > 0$ such that $\big(\mathbf{g}(\mathbf{E}) - \mathbf{g}(\tilde{\mathbf{E}})\big) \cdot (\mathbf{E} - \tilde{\mathbf{E}}) \geq c_1 \big| \mathbf{E} - \tilde{\mathbf{E}} \big|^2$
        for any $\mathbf{E}, \tilde{\mathbf{E}} \in \mathbb{R}^{3}$,

        \item There exists $c_2 > 0$ such that $\big|\mathbf{g}(\mathbf{E}) - \mathbf{g}(\tilde{\mathbf{E}})\big| \leq c_2 |\mathbf{E}-\tilde{\mathbf{E}}|$ for any $\mathbf{E}, \tilde{\mathbf{E}} \in \mathbb{R}^{3}$.
    \end{enumerate}
\end{assumption}

\begin{remark}
    In contrast to the wave equation, which is known \cite{LaTa1993} to admit feedback functions with a superlinear growth rate (in $y$, not $y_{t}$),
    this is no longer true for Maxwell's equations since superlinear terms can cause the solution to leave the basic $L^{2}$-space thus destroying the well-posedness.
    In this sense, the results of our paper appear to be optimal -- at least at the basic energy level.
\end{remark}

The following two lemmas are quoted from \cite{ElLaNi2002}.

\begin{lemma}
    \label{LEMMA_GREENS_FORMULA}
    For all $\mathbf{E}, \mathbf{H} \in H(\CURL, G)$ with $\mathbf{E} \times \boldsymbol{\nu}|_{\Gamma}, \mathbf{H} \times \boldsymbol{\nu}|_{\Gamma} \in \big(L^{2}(\Gamma)\big)^{3}$, we have
    \begin{equation}
        \notag
        \int_G \big( \CURL \e \cdot \h  - \CURL \h \cdot \e \big) \dx = \int_{\Gamma} (\h \times \boldsymbol{\nu}) \cdot \e \, \d \x.
    \end{equation}
\end{lemma}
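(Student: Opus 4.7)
The plan is to prove the identity first for smooth vector fields and then extend it by density to the full class stated in the lemma. For $\e, \h \in C^{1}(\bar G, \RR^{3})$, the pointwise vector identity $\DIV(\e \times \h) = \CURL \e \cdot \h - \e \cdot \CURL \h$ combined with the classical divergence theorem gives $\int_G (\CURL \e \cdot \h - \e \cdot \CURL \h)\, \dx = \int_\Gamma (\e \times \h) \cdot \n\, \d \x$, and the cyclic property of the scalar triple product rewrites the boundary integrand as $(\h \times \n) \cdot \e$. This yields the formula in the smooth case.

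To reach the general case I would invoke the standard density of $C^{\infty}(\bar G, \RR^{3})$ in $H(\CURL, G)$ and approximate both $\e$ and $\h$ in the graph norm of $\CURL$ by smooth sequences $\e_n$ and $\h_n$. The left-hand side passes to the limit immediately via the Cauchy--Schwarz inequality. For the right-hand side, however, I cannot expect the approximating tangential traces to converge in $L^{2}(\Gamma)^{3}$; instead I interpret the boundary integral through the duality pairing between the spaces $H^{-1/2}(\DIV,\Gamma)$ and $H^{-1/2}(\CURL,\Gamma)$. Since the map $\mathbf{u} \mapsto \mathbf{u} \times \n$ is continuous from $H(\CURL,G)$ into $H^{-1/2}(\DIV,\Gamma)$, the pairing is stable under the approximation and one obtains the identity in its duality-form first.

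The main obstacle is precisely this trace/density interplay: $H(\CURL, G)$-convergence does not entail strong $L^{2}(\Gamma)^{3}$-convergence of the tangential traces, so the Green's identity has to be passed to the limit through the $H^{-1/2}$-duality pairing on $\Gamma$ and only afterwards identified with an honest $L^{2}$-integral. The standing assumption $\e \times \n, \h \times \n \in L^{2}(\Gamma)^{3}$ is exactly what is needed for this last step: it guarantees that the abstract duality pairing collapses to the ordinary surface integral $\int_\Gamma (\h \times \n) \cdot \e\, \d \x$, yielding the stated formula. All the underlying trace-space machinery is by now classical (see e.g.\ the works of Buffa--Ciarlet, Costabel, or Cessenat), and the result itself is merely quoted from \cite{ElLaNi2002}, so the density/regularity argument above is in any case standard.
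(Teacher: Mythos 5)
The paper does not actually prove this lemma: it is quoted verbatim from the reference \cite{ElLaNi2002}, so there is no internal proof to compare against. Your argument is the standard one and is sound in outline: the smooth-field identity via $\operatorname{div}(\e\times\h)=\CURL\e\cdot\h-\e\cdot\CURL\h$, the divergence theorem, the cyclic rewriting of the boundary integrand, and the passage to $H(\CURL,G)$ through the tangential-trace duality pairing (which is exactly the framework the authors invoke in the remark immediately following the lemma). You also correctly identify the one genuinely delicate step, namely that $H(\CURL,G)$-convergence does not control the $L^{2}(\Gamma)$-norm of the traces, so the limit must first be taken in the $H^{-1/2}$ duality. The only place where your write-up asserts rather than argues is the final collapse of the duality pairing to the honest surface integral: the hypothesis $\e\times\n,\h\times\n\in(L^{2}(\Gamma))^{3}$ alone does not make this automatic --- one needs that smooth fields are dense in the space $\{\mathbf{u}\in H(\CURL,G):\mathbf{u}\times\n\in(L^{2}(\Gamma))^{3}\}$ equipped with the graph norm including the boundary $L^{2}$-norm, so that the pairing and the integral, which agree on this dense set, agree on the whole space. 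That density statement is precisely the technical content of the corresponding lemma in \cite{ElLaNi2002}, so your proof is complete modulo citing (or reproving) it; as written it names the obstacle without fully discharging it.
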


\begin{remark}
    By virtue of \cite[Chapter 2]{Ce96}, the statement of Lemma remains true for general fields $\mathbf{E}, \mathbf{H} \in H(\CURL, G)$
    if the latter integral is replaced by the dual paring between the spaces
    $H^{-1/2}(\operatorname{div}, \Gamma)$ and $H^{1/2}(\operatorname{curl}, \Gamma)$.
\end{remark}

Let $P_{\boldsymbol{\varepsilon}}$ denote the orthogonal projection on $H(\DIV_{\boldsymbol{\varepsilon}} 0, G)$ in $(L^2(G))^3$.
Combining \cite[Lemma 2.3]{ElLaNi2002} with the `tangentiality' of $\mathbf{Z} \in L^{2}_{\tau}\big(\Gamma, \mathbb{R}^{3}\big)$, we get:
\begin{lemma}
    \label{LEMMA_DENSITY}
    The image $P_{\boldsymbol{\varepsilon}} \big((\mathcal{D}(G))^3\big)$ is dense in $H(\DIV_{\boldsymbol{\varepsilon}} 0, G)$.
    The domain of the operator $\mathscr{A}$ is dense in $\mathscr{H}$.
\end{lemma}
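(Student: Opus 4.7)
The first assertion is a direct restatement of \cite[Lemma 2.3]{ElLaNi2002} and requires no further argument. For the second assertion -- density of $D(\mathscr{A})$ in $\mathscr{H}$ -- my plan is, given an arbitrary target $(\mathbf{E}^{\ast}, \mathbf{H}^{\ast}, \mathbf{Z}^{\ast}) \in \mathscr{H}$, to construct explicitly an approximating sequence $(\mathbf{E}_{n}, \mathbf{H}_{n}, \mathbf{Z}_{n}) \in D(\mathscr{A})$. The difficulty is that $D(\mathscr{A})$ couples all three slots on $\Gamma$ through the trace-compatibility $\mathbf{Z}|_{s=0} = \mathbf{E} \times \boldsymbol{\nu}$ and the nonlinear Silver--M\"uller condition, so the slots cannot be approximated entirely independently.

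First, using the first assertion, pick $\boldsymbol{\phi}_{n}, \boldsymbol{\psi}_{n} \in (\mathcal{D}(G))^{3}$ so that $\mathbf{E}_{n} := P_{\boldsymbol{\varepsilon}} \boldsymbol{\phi}_{n} \to \mathbf{E}^{\ast}$ in $H(\DIV_{\boldsymbol{\varepsilon}} 0, G)$ and $\mathbf{H}_{n} := P_{\boldsymbol{\mu}} \boldsymbol{\psi}_{n} \to \mathbf{H}^{\ast}$ in $H(\DIV_{\boldsymbol{\mu}} 0, G)$. Since $\CURL P_{\boldsymbol{\varepsilon}} \boldsymbol{\phi}_{n} = \CURL \boldsymbol{\phi}_{n}$, we automatically have $\mathbf{E}_{n} \in H(\CURL, G)$, and a Helmholtz/elliptic regularity argument (noting that $P_{\boldsymbol{\varepsilon}} \boldsymbol{\phi}_{n} = \boldsymbol{\phi}_{n} - \nabla p_{n}$ with $p_{n}$ solving a weighted Neumann problem with smooth compactly supported source) yields $\mathbf{E}_{n} \times \boldsymbol{\nu}|_{\Gamma} \in L^{2}(\Gamma, \mathbb{R}^{3})$ -- similarly for $\mathbf{H}_{n}$. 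Next, using that $C_{c}^{\infty}((0, 1); L^{2}_{\tau}(\Gamma, \mathbb{R}^{3}))$ is dense in $L^{2}(0, 1; L^{2}_{\tau}(\Gamma, \mathbb{R}^{3}))$, pick $\tilde{\mathbf{Z}}_{n}$ in the former approximating $\mathbf{Z}^{\ast}$; these vanish near both $s = 0$ and $s = 1$.

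To install the compatibility and boundary conditions, I would modify $\tilde{\mathbf{Z}}_{n}$ by two thin-layer correctors. Take cutoffs $\chi_{n}^{0}, \chi_{n}^{1} \in C^{\infty}([0, 1])$ supported in $[0, \delta_{n}]$ and $[1 - \delta_{n}, 1]$ with $\chi_{n}^{0}(0) = \chi_{n}^{1}(1) = 1$, and set
\[
    \mathbf{Z}_{n}(s, \cdot) := \tilde{\mathbf{Z}}_{n}(s, \cdot) + \chi_{n}^{0}(s) \, (\mathbf{E}_{n} \times \boldsymbol{\nu}|_{\Gamma}) + \chi_{n}^{1}(s) \, \mathbf{W}_{n},
\]
where $\mathbf{W}_{n} \in L^{2}_{\tau}(\Gamma, \mathbb{R}^{3})$ is the unique tangential field determined pointwise on $\Gamma$ by
\[
    \gamma_{2} \, \mathbf{g}(\mathbf{W}_{n}) \times \boldsymbol{\nu} + \mathbf{H}_{n} \times \boldsymbol{\nu} + \gamma_{1} \, \mathbf{g}(\mathbf{E}_{n} \times \boldsymbol{\nu}) \times \boldsymbol{\nu} = \mathbf{0}.
\]
Existence and uniqueness of $\mathbf{W}_{n}$ follow from Assumption \ref{ASSUMPTION_NONLINEARITY_G} by a pointwise Minty argument applied on the two-dimensional tangent plane; here the tangentiality of $\mathbf{Z} \in L^{2}_{\tau}$ emphasized in the statement is exactly what restricts the problem to the tangent plane and makes $\mathbf{W} \mapsto \mathbf{g}(\mathbf{W}) \times \boldsymbol{\nu}$ a bilipschitz bijection there. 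By construction, $(\mathbf{E}_{n}, \mathbf{H}_{n}, \mathbf{Z}_{n}) \in D(\mathscr{A})$; choosing $\delta_{n} \to 0$ slowly enough so that $\sqrt{\delta_{n}} \, (\|\mathbf{E}_{n} \times \boldsymbol{\nu}\|_{L^{2}(\Gamma)} + \|\mathbf{W}_{n}\|_{L^{2}(\Gamma)}) \to 0$ makes both correctors vanish in $L^{2}(0, 1; L^{2}_{\tau}(\Gamma))$, giving the desired convergence $(\mathbf{E}_{n}, \mathbf{H}_{n}, \mathbf{Z}_{n}) \to (\mathbf{E}^{\ast}, \mathbf{H}^{\ast}, \mathbf{Z}^{\ast})$ in $\mathscr{H}$.

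The main obstacle is the first technical step: ensuring that the Helmholtz-projected approximants $\mathbf{E}_{n}, \mathbf{H}_{n}$ have tangential traces in $L^{2}(\Gamma, \mathbb{R}^{3})$ with a controlled norm, so that they actually sit in $D(\mathscr{A})$. This rests on enough boundary regularity of the elliptic correctors $p_{n}, q_{n}$; the continuity and pointwise positive-definiteness of $\boldsymbol{\varepsilon}, \boldsymbol{\mu}$ from Assumption \ref{ASSUMPTION_BASIC_CONDITIONS_COEFFICIENTS} combined with the smoothness and compact support of $\boldsymbol{\phi}_{n}, \boldsymbol{\psi}_{n}$ should suffice; if not, one may instead work with approximants already in $(\mathcal{D}(G))^{3} \cap H(\DIV_{\boldsymbol{\varepsilon}} 0, G)$ so that the projection acts as the identity and the tangential trace vanishes outright. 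Once this is settled, the rest of the argument is a routine bookkeeping of cutoff widths.
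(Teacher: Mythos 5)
Your argument is correct, and it supplies details the paper itself omits: the paper simply asserts the lemma as a consequence of \cite[Lemma 2.3]{ElLaNi2002} and the tangentiality of $\mathbf{Z}$, with no written proof. Two comments on how your route compares with the (implicit) intended one. First, the ``main obstacle'' you flag at the end is already resolved by Remark \ref{REMARK_PROPERTIES_OF_P}: since $\boldsymbol{\phi}_{n}, \boldsymbol{\psi}_{n} \in (\mathcal{D}(G))^{3}$ are compactly supported, $(P_{\boldsymbol{\varepsilon}} \boldsymbol{\phi}_{n}) \times \boldsymbol{\nu} = \boldsymbol{\phi}_{n} \times \boldsymbol{\nu} = \mathbf{0}$ and likewise $\mathbf{H}_{n} \times \boldsymbol{\nu} = \mathbf{0}$ on $\Gamma$, so the tangential traces are not merely in $L^{2}(\Gamma)$ but vanish identically. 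This collapses your construction: the $s=0$ corrector is zero, the boundary condition reduces to $\gamma_{2}\, \mathbf{g}(\mathbf{Z}|_{s=1}) \times \boldsymbol{\nu} = \mathbf{0}$, which is satisfied by $\mathbf{W}_{n} = \mathbf{0}$ because $\mathbf{g}(\mathbf{0}) = \mathbf{0}$ and $\mathbf{g}$ is injective by strong monotonicity, and one may simply take $\mathbf{Z}_{n} = \tilde{\mathbf{Z}}_{n}$ vanishing near both endpoints. (Your alternative fallback of restricting to $(\mathcal{D}(G))^{3} \cap H(\DIV_{\boldsymbol{\varepsilon}} 0, G)$ is not needed and would require a separate density argument, so it is better dropped.) Second, your more elaborate version --- thin-layer cutoffs plus the pointwise Minty inversion of $\mathbf{W} \mapsto \mathbf{g}(\mathbf{W}) \times \boldsymbol{\nu}$ on the tangent plane, which is indeed a bilipschitz bijection under Assumption \ref{ASSUMPTION_NONLINEARITY_G} --- is sound and buys generality: it shows density would survive even for approximants whose tangential traces do not vanish. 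Both routes are valid; the short one is what the paper's citation of Remark \ref{REMARK_PROPERTIES_OF_P} and the tangentiality of $L^{2}_{\tau}$ is evidently meant to encode.
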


\begin{remark}
    \label{REMARK_PROPERTIES_OF_P}
    For all $\boldsymbol{\chi} \in C^{\infty}(\bar{G})$, we have $\CURL (P_{\boldsymbol{\varepsilon}} \boldsymbol{\chi}) = \CURL \chi$ in $G$ and
    $(P_{\boldsymbol{\varepsilon}} \chi) \times \n = \boldsymbol{\chi} \times \n$ on $\Gamma$.
\end{remark}

Now, we can prove the following lemma.

\begin{lemma}
    \label{LEMMA_MONOTONICITY}
    There exists a positive number $C$ such that $C \operatorname{id} + \mathscr{A}$ is a maximal monotone operator.
\end{lemma}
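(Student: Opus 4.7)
The plan is to verify in turn (i) monotonicity of $C\operatorname{id}+\mathscr{A}$ for some $C>0$ and (ii) the range condition $\operatorname{Range}(\operatorname{id}+C\operatorname{id}+\mathscr{A})=\mathscr{H}$. The key observation is that, although $D(\mathscr{A})$ hides the nonlinearity $\mathbf{g}$, the differential action of $\mathscr{A}$ is linear, so differences $\mathscr{A}V-\mathscr{A}\tilde V$ act linearly on $V-\tilde V$ and $\mathbf{g}$ surfaces only after boundary integration; density of $D(\mathscr{A})$ in $\mathscr{H}$ is already provided by Lemma \ref{LEMMA_DENSITY}.

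For (i), I would fix $V,\tilde V\in D(\mathscr{A})$, set $W=V-\tilde V$ with components $(W_{\mathbf{E}},W_{\mathbf{H}},W_{\mathbf{Z}})$ and traces $W_i:=W_{\mathbf{Z}}|_{s=i}$. Green's formula (Lemma \ref{LEMMA_GREENS_FORMULA}) on the $(\mathbf{E},\mathbf{H})$-block, integration by parts in $s$ on the delay block (using $|\mathbf{Z}\times\boldsymbol{\nu}|=|\mathbf{Z}|$ on $L^{2}_{\tau}(\Gamma)$), the nonlinear boundary identity encoded in $D(\mathscr{A})$, the triple-product identity $(\mathbf{a}\times\boldsymbol{\nu})\cdot\mathbf{b}=-\mathbf{a}\cdot(\mathbf{b}\times\boldsymbol{\nu})$, the coupling $W_{\mathbf{Z}}|_{s=0}=W_{\mathbf{E}}\times\boldsymbol{\nu}$, and Assumption \ref{ASSUMPTION_NONLINEARITY_G} together yield
\begin{equation*}
\langle\mathscr{A}V-\mathscr{A}\tilde V,W\rangle_{\mathscr{H}}\;\ge\;(\gamma_1c_1-\tfrac12)\|W_0\|_{L^2(\Gamma)}^2+\tfrac12\|W_1\|_{L^2(\Gamma)}^2-\gamma_2c_2\|W_0\|_{L^2(\Gamma)}\|W_1\|_{L^2(\Gamma)}.
\end{equation*}
Because the tangential traces $W_i$ are not seen by $\|\cdot\|_{\mathscr{H}}$, a bare shift $C\operatorname{id}$ cannot absorb an indefinite boundary form, so I would replace the weight $1$ on the delay slot of $\langle\cdot,\cdot\rangle_{\mathscr{H}}$ by an affine profile $\kappa(s)=\kappa_0+(\kappa_1-\kappa_0)s$, producing an equivalent norm $\|\cdot\|_{\ast}$ on $\mathscr{H}$. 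The same computation now leaves the boundary form $(\gamma_1c_1-\tfrac12\kappa_0)\|W_0\|^2+\tfrac12\kappa_1\|W_1\|^2-\gamma_2c_2\|W_0\|\|W_1\|$, which becomes positive semidefinite once $\kappa_1>(\gamma_2c_2)^2/(2\gamma_1c_1)$ and $\kappa_0\in(0,\,2\gamma_1c_1-(\gamma_2c_2)^2/\kappa_1)$, together with an interior residual $-\tfrac12(\kappa_1-\kappa_0)\int_0^1\|W_{\mathbf{Z}}\|^2\,\mathrm{d}s$ that is harmless and is absorbed by $C\|W\|_{\ast}^2$ for $C$ sufficiently large.

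For (ii), given $\mathbf{F}=(\mathbf{F}_1,\mathbf{F}_2,\mathbf{F}_3)^{T}\in\mathscr{H}$ and $\beta:=1+C$, the third component of $\beta V+\mathscr{A}V=\mathbf{F}$ is a linear first-order ODE in $s$ coupled through $\mathbf{Z}|_{s=0}=\mathbf{E}\times\boldsymbol{\nu}$, whose explicit solution reads $\mathbf{Z}(s)=e^{-\beta\tau s}(\mathbf{E}\times\boldsymbol{\nu})+\mathbf{R}(s)$ with $\mathbf{R}\in H^{1}(0,1;L^{2}(\Gamma))$ depending only on $\mathbf{F}_3$. Feeding $\mathbf{Z}|_{s=1}$ into the boundary condition collapses the latter to $\mathbf{H}\times\boldsymbol{\nu}+\mathscr{T}_{\beta}(\mathbf{E}\times\boldsymbol{\nu})\times\boldsymbol{\nu}=\mathbf{0}$ on $\Gamma$, with $\mathscr{T}_{\beta}(\boldsymbol{\eta}):=\gamma_1\mathbf{g}(\boldsymbol{\eta})+\gamma_2\mathbf{g}\bigl(e^{-\beta\tau}\boldsymbol{\eta}+\mathbf{R}(1)\bigr)$, which is Lipschitz on $L^{2}(\Gamma)$ and, for $C$ so large that $\gamma_1c_1>\gamma_2c_2e^{-\beta\tau}$, strongly monotone there. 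What remains is the elliptic system
\begin{equation*}
\beta\boldsymbol{\varepsilon}\mathbf{E}-\CURL\mathbf{H}=\boldsymbol{\varepsilon}\mathbf{F}_1,\quad\beta\boldsymbol{\mu}\mathbf{H}+\CURL\mathbf{E}=\boldsymbol{\mu}\mathbf{F}_2\text{ in }G,
\end{equation*}
subject to the reduced nonlinear boundary condition above. Eliminating $\mathbf{H}$ and testing against $\boldsymbol{\phi}\in\{\boldsymbol{\phi}\in H(\CURL,G)\colon\boldsymbol{\phi}\times\boldsymbol{\nu}|_{\Gamma}\in(L^{2}(\Gamma))^{3}\}$, the resulting variational equation is coercive, hemicontinuous, and monotone; the Browder--Minty theorem then delivers a unique $\mathbf{E}$, from which $\mathbf{H}$ and $\mathbf{Z}$ are reconstructed, and the divergence-free constraints defining $\mathcal{H}$ follow \emph{a posteriori} by taking divergences of the two field equations.

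The chief technical obstacle is the monotonicity step: because $\|W_0\|,\|W_1\|$ are tangential traces not controlled by $\|\cdot\|_{\mathscr{H}}$, the boundary quadratic form must be rendered nonnegative \emph{before} the shift $C\operatorname{id}$ enters, and this forces the weighted-in-$s$ equivalent norm and the precise algebra that selects $\kappa_0,\kappa_1$. Once this is in place, the Browder--Minty reduction in the range step is standard, provided the tangential-trace regularity in the variational space is carefully booked.
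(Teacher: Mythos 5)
Your proposal is correct and follows the same two-step architecture as the paper's proof: an equivalent weighted-in-$s$ inner product to make $C\operatorname{id}+\mathscr{A}$ monotone, followed by an explicit solve of the $\mathbf{Z}$-transport equation, elimination of $\mathbf{H}$, and a Browder--Minty argument for the reduced second-order problem in $\mathbf{E}$. Two points of comparison are worth recording. First, you take an affine weight $\kappa(s)$ where the paper takes $\xi e^{cs}$; both choices render the boundary form $(\gamma_1c_1-\tfrac{\kappa_0}{2})\|W_0\|^2+\tfrac{\kappa_1}{2}\|W_1\|^2-\gamma_2c_2\|W_0\|\,\|W_1\|$ positive semidefinite with no smallness condition on $\gamma_2$, and both leave an interior residual that the shift $C\operatorname{id}$ absorbs, so this difference is cosmetic. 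Second, and more substantively, the paper works in the space $W_{\boldsymbol{\varepsilon}}$ which carries $\DIV(\boldsymbol{\varepsilon}\mathbf{e})\in L^2(G)$, adds the penalization $s\,\DIV(\boldsymbol{\varepsilon}\mathbf{e})\DIV(\boldsymbol{\varepsilon}\mathbf{e}')$ to the form, and proves $\DIV(\boldsymbol{\varepsilon}\mathbf{e})=0$ afterwards by testing with gradients and choosing $s$ so that $b^2/s$ lies in the resolvent set of the Dirichlet operator $\DIV(\boldsymbol{\varepsilon}\nabla\cdot)$; you instead drop the divergence from the variational space and recover both constraints a posteriori by taking distributional divergences of the strong interior equations, using $\DIV\CURL=0$ together with $\mathbf{F}_1\in H(\operatorname{div}_{\boldsymbol{\varepsilon}}0,G)$ and $\mathbf{F}_2\in H(\operatorname{div}_{\boldsymbol{\mu}}0,G)$. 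That shortcut is legitimate and dispenses with the penalization parameter and the spectral argument entirely, at the cost of having to carry out the identification of the strong equation against $(\mathcal{D}(G))^3$ test functions before the divergence can be taken (the paper does the analogous identification via the projections $P_{\boldsymbol{\varepsilon}}\boldsymbol{\chi}$). One small remark: your hypothesis $\gamma_1c_1>\gamma_2c_2e^{-\beta\tau}$ for strong monotonicity of $\mathscr{T}_{\beta}$ is superfluous, since the delayed term is $\mathbf{g}$ composed with $\boldsymbol{\eta}\mapsto e^{-\beta\tau}\boldsymbol{\eta}+\mathbf{R}(1)$ and is therefore itself monotone, contributing $+\gamma_2c_1e^{-\beta\tau}|\boldsymbol{\eta}-\boldsymbol{\eta}'|^2$ to the boundary form --- exactly the constant $b\,c_1(\gamma_1+\gamma_2e^{-\tau b})$ appearing in the paper's estimate.
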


\begin{proof}
\emph{Monotonicity: }
Consider a new inner product on $\mathscr{H}$ defined via
\begin{align*}
    \big\langle (\mathbf{E}, \mathbf{H}, \mathbf{Z})^{T}, (\tilde{\mathbf{E}}, \tilde{\mathbf{H}}, \tilde{\mathbf{Z}})^{T}\big\rangle_{\tilde{\mathscr{H}}}
    &:=
    \big\langle (\mathbf{E}, \mathbf{H})^{T}, (\tilde{\mathbf{E}}, \tilde{\mathbf{H}})^{T}\big\rangle_{\mathcal{H}} \\
    &+ \xi \tau \int_{0}^{1} \int_{\Gamma} e^{c s} \big(\mathbf{E}(t - \tau s) \times \boldsymbol{\nu}\big) \cdot
    \big(\tilde{\mathbf{E}}(t - \tau s) \times \boldsymbol{\nu}\big) \mathrm{d}\mathbf{x} \mathbf{d}s.
\end{align*}
Here $c, \xi$ are positive numbers and will be chosen later.
Obviously, $\big\langle \cdot, \cdot \big\rangle_{\tilde{\mathscr{H}}} $ is equivalent with the original inner product $\big\langle \cdot, \cdot \big\rangle_{\mathscr{H}}$.

First, we show that $C \operatorname{id} + \mathscr{A}$ is a monotone operator for some $C > 0$.
For all $(\e,\h,\z)^{T},$ $(\e',\h',\z')^{T} \in D(\mathscr{A})$, letting $(\tilde{\e},\tilde{\h},\tilde{\z})^{T} = (\e,\h,\z)^{T} - (\e',\h',\z')^{T}$, we obtain
\begin{align}
    \notag
& \Bigg\langle (C\operatorname{id} + \mathscr{A}) \begin{pmatrix} \e  \\ \h \\ \z   \end{pmatrix}  - (C \operatorname{id} + \mathscr{A}) \begin{pmatrix} \e'  \\ \h' \\ \z'  \end{pmatrix}, \begin{pmatrix} \e  \\ \h \\ \z  \end{pmatrix} - \begin{pmatrix} \e'  \\ \h' \\ \z'  \end{pmatrix}\Bigg\rangle_{\tilde{\mathscr{H}}} \\
    \notag
& = C \left\| \begin{pmatrix} \tilde{\e}  \\ \tilde{\h} \\ \tilde{\z}   \end{pmatrix}  \right\|^2_{\tilde{\mathscr{H}}} + \Bigg\langle  \mathscr{A} \begin{pmatrix} \tilde{\e}  \\ \tilde{\h} \\ \tilde{\z}  \end{pmatrix}  , \begin{pmatrix} \tilde{\e}  \\ \tilde{\h} \\ \tilde{\z}    \end{pmatrix} \Bigg\rangle_{\tilde{\mathscr{H}}}\\
    \notag
& = C \left\|   \begin{pmatrix} \tilde{\e}  \\ \tilde{\h} \\ \tilde{\z} \end{pmatrix} \right\|^2_{\tilde{\mathscr{H}}}
    + \Bigg\langle  \begin{pmatrix} -\boldsymbol{\varepsilon}^{-1} \CURL \tilde{\h}  \\ \boldsymbol{\mu}^{-1} \CURL \tilde{\e} \\ \tau^{-1} \partial_{s} \tilde{\z}    \end{pmatrix} , \begin{pmatrix} \tilde{\e}  \\ \tilde{\h} \\ \tilde{\z} \end{pmatrix} \Bigg\rangle_{\tilde{\mathscr{H}}}\\
    \label{MONOT_EQ1}
& = C \left\|   \begin{pmatrix} \tilde{\e}  \\ \tilde{\h} \\ \tilde{\z} \end{pmatrix} \right\|^2_{\tilde{\mathscr{H}}}
    + \int_{G} \big( \CURL \tilde{\e} \cdot \tilde{\h} - \CURL \tilde{\h} \cdot \tilde{\e} \big) \mathrm{d}\mathbf{x} + \xi \int_0^1 \int_{\Gamma} e^{cs} \partial_{s} \tilde{\z} \cdot \tilde{\z} \mathrm{d}\x \mathbf{d}s.
\end{align}

Using Lemma \ref{LEMMA_GREENS_FORMULA} and the boundary condition from Equation (\ref{EQUATION_MAXWELL_BC_1}), we get
\begin{align}
\notag &\int_{G} \big( \CURL \tilde{\e} \cdot \tilde{\h} - \CURL \tilde{\h} \cdot \tilde{\e} \big) \mathbf{d}\mathbf{x} = \int_{\Gamma} \tilde{\h} \times \boldsymbol{\nu} \cdot \tilde{\e}  \d \x \\
\notag & = \int_{\Gamma} \big( \gamma_1 \g(\e' \times \n)\times \n + \gamma_2 \g(\left. \mathbf{Z}' \right|_{s=1})\times \n - \gamma_1 \g(\e \times \n)\times \n - \gamma_2 \g(\zz{1} )\times \n \big)  \cdot \big( \e - \e' \big) \d \x \\
\notag & = \int_{\Gamma} \big( \gamma_1 \g(\e' \times \n) + \gamma_2 \g(\left. \mathbf{Z}' \right|_{s=1}) - \gamma_1 \g(\e \times \n) - \gamma_2 \g(\zz{1} ) \big) \times \n \cdot \big( \e - \e' \big) \d \x \\
\label{MONOT_EQ2} & = \int_{\Gamma} \big( \gamma_1 \g(\e \times \n) + \gamma_2 \g(\zz{1} ) - \gamma_1 \g(\e' \times \n) - \gamma_2 \g(\left. \mathbf{Z}' \right|_{s=1}) \big)  \cdot \big( \e - \e' \big)           \times \n \d \x \\
\notag & = \int_{\Gamma} \gamma_1 \big(  \g(\e \times \n)  -  \g(\e' \times \n) \big)\cdot \big( \e \times \n  - \e' \times \n \big)   + \gamma_2 \big( \g(\zz{1} ) - \g(\left. \mathbf{Z}' \right|_{s=1}) \big)  \cdot \big( \tilde{\mathbf{Z}} |_{s=0} \big) \d \x.
\end{align}
Recalling Assumption \ref{ASSUMPTION_NONLINEARITY_G} and using Cauchy \& Schwarz' inequality, the latter integral can be estimated both on the low
\begin{align}
\label{MONOT_EQ3}
    \begin{split}
        \int_{\Gamma} \gamma_1 \big(  \g(\e \times \n)  -  \g(\e' \times \n) \big)\cdot \big( \e \times \n  &- \e' \times \n \big) \d\x  \geq  \int_{\Gamma} \gamma_1 c_1 \big|  \e \times \n  - \e' \times \n \big|^2 \d\x \\
        &= \gamma_1 c_1 \int_{\Gamma} \tilde{\mathbf{Z}}^2|_{s=0} \d \x = \gamma_1 c_1 \|  \tilde{\mathbf{Z}} |_{s=0}  \|^2_{(L^2(\Gamma))^3}
    \end{split}
\end{align}
and the high side
\begin{align}
\label{MONOT_EQ4}
\begin{split}
\int_{\Gamma} \gamma_2 \big( \g(\zz{1} ) &- \g(\left. \mathbf{Z}' \right|_{s=1}) \big)  \cdot \big( \tilde{\mathbf{Z}} |_{s=0} \big)   \d\x \\
&\geq - \gamma_2 \left( \int_{\Gamma}   \big( \g(\zz{1} ) - \g(\left. \mathbf{Z}' \right|_{s=1}) \big)^2 \d\x \int_{\Gamma}  \tilde{\mathbf{Z}}^2 |_{s=0} \d\x \right)^\frac12 \\
&\geq   - \gamma_2 \left( \int_{\Gamma}   \big( c_2 \big( \zz{1} -  \mathbf{Z}' |_{s=1}) \big)^2 \d\x \int_{\Gamma}   \tilde{\mathbf{Z}}^2 |_{s=0} \d\x \right)^\frac12 \\
&= - \gamma_2 c_2 \|  \tilde{\mathbf{Z}} |_{s=1}  \|_{(L^2(\Gamma))^3} \cdot \|  \tilde{\mathbf{Z}} |_{s=0}  \|_{(L^2(\Gamma))^3}.
\end{split}
\end{align}

Now, consider the latter term in Equation (\ref{MONOT_EQ1}). Integrating by parts, we get
\begin{align}
\notag \xi \int_0^1 \int_{\Gamma} e^{cs} \partial_{s} \tilde{\z} \cdot \tilde{\z} \mathrm{d}\x \mathrm{d}s & = \frac{\xi}{2} \int_{\Gamma} \int_0^1  e^{cs} \partial_{s} (\tilde{\z}^2)   \mathrm{d}s \mathrm{d}\x \\
\notag & = \frac{\xi}{2} \int_{\Gamma} \left( e^{cs}  \tilde{\z}^2 |_{s=0}^{s=1} - \int_0^1 c e^{cs}  \tilde{\z}^2   \mathrm{d}s \right) \mathrm{d}\x \\
\notag & = \frac{\xi}{2} \int_{\Gamma} \left( e^{c}  \tilde{\z}^2 |_{s=1} -  \tilde{\z}^2 |_{s=0}  - \int_0^1 c e^{cs}  \tilde{\z}^2   \mathrm{d}s \right) \mathrm{d}\x \\
\label{MONOT_EQ5} & = \frac{e^{c}\xi}{2} \| \tilde{\z} |_{s=1} \|^2_{(L^2(\Gamma))^3} - \frac{\xi}{2} \| \tilde{\z} |_{s=0} \|^2_{(L^2(\Gamma))^3} - \frac{\xi c}{2} \int_{\Gamma} \int_0^1  e^{cs}  \tilde{\z}^2   \mathrm{d}s  \mathrm{d}\x.
\end{align}
Recalling Equations (\ref{MONOT_EQ2})--(\ref{MONOT_EQ5}), we obtain
\begin{align}
\notag
& C \left\|     \begin{pmatrix} \tilde{\e}  \\ \tilde{\h} \\ \tilde{\z} \end{pmatrix} \right\|^2_{\tilde{\mathscr{H}}}
+ \int_{G} \big( \CURL \tilde{\e} \cdot \tilde{\h} - \CURL \tilde{\h} \cdot \tilde{\e} \big) \mathrm{d}\mathbf{x} + \xi \int_0^1 \int_{\Gamma} e^{cs} \partial_{s} \tilde{\z} \cdot \tilde{\z} \mathrm{d}\x \mathrm{d}s \\
\notag
& \geq C \left\|    \begin{pmatrix} \tilde{\e}  \\ \tilde{\h}   \end{pmatrix} \right\|^2_{\mathcal{H}} + C\xi \tau \int_0^1 \int_{\Gamma} e^{cs} \tilde{\z}^2 \d \x \d s + \big( \gamma_1 c_1 - \frac{\xi}{2}\big)\| \tilde{\z} |_{s=0} \|^2_{(L^2(\Gamma))^3} + \frac{\xi e^{c}}{2} \| \tilde{\z} |_{s=1} \|^2_{(L^2(\Gamma))^3} \\
\notag
&  - \frac{c\xi}{2}\int_0^1 \int_{\Gamma} e^{cs} \tilde{\z}^2 \d \x \d s - \gamma_2 c_2 \|  \tilde{\mathbf{Z}} |_{s=1}  \|_{(L^2(\Gamma))^3} \cdot \|  \tilde{\mathbf{Z}} |_{s=0}  \|_{(L^2(\Gamma))^3}.
\end{align}

Taking now $\xi < 2\gamma_1 c_1$ and applying Cauchy \& Schwarz' inequality, we arrive at
\begin{align}
\notag
 C & \left\|    \begin{pmatrix} \tilde{\e}  \\ \tilde{\h}   \end{pmatrix} \right\|^2_{\mathcal{H}} + C\xi \tau \int_0^1 \int_{\Gamma} e^{cs} \tilde{\z}^2 \d \x \d s + \big( \gamma_1 c_1 - \frac{\xi}{2}\big)\| \tilde{\z} |_{s=0} \|^2_{(L^2(\Gamma))^3} + \frac{\xi e^{c}}{2} \| \tilde{\z} |_{s=1} \|^2_{(L^2(\Gamma))^3} \\
\notag
&-\frac{c\xi}{2}\int_0^1 \int_{\Gamma} e^{cs} \tilde{\z}^2 \d \x \d s - \gamma_2 c_2 \|  \tilde{\mathbf{Z}} |_{s=1}  \|_{(L^2(\Gamma))^3} \cdot \|  \tilde{\mathbf{Z}} |_{s=0}  \|_{(L^2(\Gamma))^3} \\
\notag
&\geq C \left\|  \begin{pmatrix} \tilde{\e}  \\ \tilde{\h}   \end{pmatrix} \right\|^2_{\mathcal{H}} + 2\left( \big( \gamma_1 c_1 - \frac{\xi}{2}\big)\frac{\xi e^c}{2}\right)^{\frac12} \|  \tilde{\mathbf{Z}} |_{s=1}  \|_{(L^2(\Gamma))^3} \cdot \|  \tilde{\mathbf{Z}} |_{s=0}  \|_{(L^2(\Gamma))^3}  \\
\label{MONOT_EQ6}
& + \xi \big(C \tau - \tfrac{c}{2}\big) \int_0^1 \int_{\Gamma} e^{cs} \tilde{\z}^2 \d \x \d s  - \gamma_2 c_2 \|  \tilde{\mathbf{Z}} |_{s=1}  \|_{(L^2(\Gamma))^3} \cdot \|  \tilde{\mathbf{Z}} |_{s=0}  \|_{(L^2(\Gamma))^3}.
\end{align}

Finally, selecting $c$ such that $2\left( \big( \gamma_1 c_1 - \frac{\xi}{2}\big)\frac{\xi e^c}{2}\right)^{\frac12} \geq \gamma_2 c_2$
and then choosing $C > \frac{c}{2\tau}$, the right hand side of Equation (\ref{MONOT_EQ6}) is rendered positive implying the monotonicity of $\mathscr{A}$.

\emph{Maximality}:
By virtue of Browder \& Minty's Theorem \cite[Theorem 2.2]{Ba2010},
it suffices to prove $(C + \lambda) \operatorname{id} + \mathscr{A}$ is surjective for at least one $\lambda > 0$, i.e.,
for any $(\mathbf{F}_1,\mathbf{F}_2,\mathbf{F}_3)^{T} \in \mathscr{H}$, we need to find $(\e, \h, \z)\in D(\mathscr{A})$ such that
\begin{align}
\label{MONOT_EQ7}
\big((C+1) \operatorname{id} + \mathscr{A}\big)\begin{pmatrix} \e  \\ \h \\ \z  \end{pmatrix} = \begin{pmatrix} \mathbf{F}_1  \\ \mathbf{F}_2 \\ \mathbf{F}_3  \end{pmatrix}.
\end{align}

Let $b = C + 1$. From Equation (\ref{MONOT_EQ7}), we have $b\z + \tau^{-1} \partial_{s} \z = \mathbf{F}_3$, whence we easily get
\begin{align}
\label{MONOT_Z_EXACT}
\z(t,s,\mathbf{x}) = e^{-\tau b s} \left( \int_0^s \mathbf{F}_3(t,s,\mathbf{x}) e^{\tau b r} \d r + \e(t,\mathbf{x}) \times \n \right).
\end{align}
In particular,
\begin{align}
\label{MONOT_Z1_EXACT}
\z(t,s,\mathbf{x})|_{s=1} &= e^{-\tau b } \left( \int_0^1 \mathbf{F}_3(t,r,\mathbf{x}) e^{\tau b r} \d r + \e(t,\mathbf{x}) \times \n \right), \\
\label{MONOT_Z0_EXACT}
\z(t,s,\mathbf{x})|_{s=0} &= \e(t,\mathbf{x}) \times \n.
\end{align}
Further, using Equation (\ref{MONOT_EQ7}), we obtain
\begin{equation}
\label{MONOT_H_DEFINE}
\h = b^{-1}\big(\mathbf{F}_2 - \boldsymbol{\mu}^{-1} \CURL \e \big)
\end{equation}
to arrive at
\begin{align}
\label{MONOT_EQ8}
b^2\boldsymbol{\varepsilon} \e - \CURL (\mathbf{F}_2 - \boldsymbol{\mu}^{-1} \CURL \e) = b \boldsymbol{\varepsilon} \mathbf{F}_1.
\end{align}
At the first glance, $\h \in \big(L^{2}(G)\big)^{3}$, but $\CURL \h \in \big(L^{2}(G)\big)^{3}$ and $\boldsymbol{\nu} \times \mathbf{H} \in \big(L^{2}(G)\big)^{3}$
will later follow from the regularity of $\mathbf{E}$ (cf. \cite[p. 38]{NiPi2007}).

Equation (\ref{MONOT_EQ8}) is formally equivalent with
\begin{align}
\label{MONOT_EQ9}
b^2\boldsymbol{\varepsilon} \e + \CURL (\boldsymbol{\mu}^{-1} \CURL \e) = b \boldsymbol{\varepsilon} \mathbf{F}_1 + \CURL \mathbf{F}_2,
\end{align}
while the boundary condition in Equation (\ref{EQUATION_MAXWELL_BC_1}) can formally be transformed to
\begin{align}
\label{MONOT_EQ10}
-b^{-1}\boldsymbol{\mu}^{-1} \CURL \e \times \n + \gamma_1 \g(\zz{0})\times \n + \gamma_2 \g(\zz{1})\times \n = -b^{-1} \mathbf{F}_2 \times \n,
\end{align}
where $\zz{1}$ and $\zz{0}$ are given by Equations (\ref{MONOT_Z1_EXACT}) and (\ref{MONOT_Z0_EXACT}), respectively.

Define the Hilbert space
\begin{equation}
\label{MONOT_SPACE_W}
W_{\boldsymbol{\varepsilon}} = \left\{ \e \in (L^2(G))^3\ |\ \CURL \e \in (L^2(G))^3,\ \DIV(\boldsymbol{\varepsilon} \e) \in L^2(G),\ \e \times \n \in (L^2(\Gamma))^3 \right\}
\end{equation}
endowed with the norm
\begin{equation}
\label{MONOT_NORM_W}
\| \e \|^2_{W_{\boldsymbol{\varepsilon}}} = \int_G |\e|^2 + |\CURL \e|^2 + |\DIV(\boldsymbol{\varepsilon} \e)|^2\ \dx + \int_{\Gamma} |\e \times \n|^2\ \d\x.
\end{equation}

Consider the variational problem: Find $\e \in W_{\boldsymbol{\varepsilon}}$ such that
\begin{equation}
\label{MONOT_VARIATIONAL_PROBLEM}
\a(\e, \e') = \int_G b\boldsymbol{\varepsilon} \mathbf{F}_1 \cdot \e' + \mathbf{F}_2 \cdot \CURL \e'\ \dx \text{ for any } \e' \in W_{\boldsymbol{\varepsilon}}.
\end{equation}
Here, the nonlinear form $\a(\cdot, \cdot)$ is defined by
\begin{align*}
\label{MONOT_FORM_DEFINITION}
\a(\e, \e') :=  & \int_G b^2\boldsymbol{\varepsilon} \e \cdot \e' + \boldsymbol{\mu}^{-1}  \CURL \e \cdot \CURL \e' + s \DIV(\boldsymbol{\varepsilon} \e) \DIV (\boldsymbol{\varepsilon} \e')\ \dx \\
& + b\int_{\Gamma} (\e' \times \n) \cdot \big(\gamma_1 \g(\zz{0}) + \gamma_2 \g (\zz{1})\big)\ \d\x,
\end{align*}
where $\zz{1}$ and $\zz{0}$ are given by Equations (\ref{MONOT_Z1_EXACT}) and (\ref{MONOT_Z0_EXACT}), respectively, and $s$ is a positive number to be chosen later.

Similar to \cite{ElLaNi2002}, consider the operator
\begin{equation}
\opA : W_{\boldsymbol{\varepsilon}} \to W'_{\boldsymbol{\varepsilon}}, \quad \opA u (v) = \a(u,v).
\end{equation}
Observing that right-hand side of Equation (\ref{MONOT_VARIATIONAL_PROBLEM}) belongs to the space $W'_{\boldsymbol{\varepsilon}}$,
the solvability of Equation (\ref{MONOT_VARIATIONAL_PROBLEM}) needs to follow from surjectivity of the operator $\opA$.
Using \cite[Corollary 2.2]{Sho1997} and the fact that strong monotonicity implies coercivity,
it is sufficient to prove $\opA$ is strongly monotone, hemicontinuous and bounded.

\textit{ Strong monotonicity: }
For any $\e,\e' \in W_{\boldsymbol{\varepsilon}}$, letting $\tilde{\e} = \e - \e'$, we have
\begin{align}
\notag
\big\langle \opA \e  - \opA \e', \e &- \e' \big\rangle_{W'_{\boldsymbol{\varepsilon}}\times W_{\boldsymbol{\varepsilon}}} =  \big\langle \opA \e , \e - \e' \big\rangle_{W'_{\boldsymbol{\varepsilon}}\times W_{\boldsymbol{\varepsilon}}} - \big\langle  \opA \e', \e - \e' \big\rangle_{W'_{\boldsymbol{\varepsilon}}\times W_{\boldsymbol{\varepsilon}}} \\
\notag
&= \a(\e,\e - \e') - \a(\e', \e - \e') \\
\notag
&= \int_G b^2\boldsymbol{\varepsilon} \e \cdot \tilde{\e} + \boldsymbol{\mu}^{-1} \CURL \e \cdot \CURL \tilde{\e} + s \DIV(\boldsymbol{\varepsilon} \e)\DIV(\boldsymbol{\varepsilon} \tilde{\e})\ \dx + \\
\notag
&+ b\int_{\Gamma} (\tilde{\e} \times \n) \cdot \Big( \gamma_1 \g (\e \times \n) + \gamma_2 \g\big(e^{-\tau b}(\int_0^1 \mathbf{F}_3(r) e^{\tau b r} \d r + \e \times \n)\big)\Big)\ \d\x \\
\notag
&- \int_G b^2\boldsymbol{\varepsilon} \e' \cdot \tilde{\e} + \boldsymbol{\mu}^{-1} \CURL \e' \cdot \CURL \tilde{\e} + s \DIV(\boldsymbol{\varepsilon} \e')\DIV(\boldsymbol{\varepsilon} \tilde{\e})\ \dx + \\
\notag
&- b\int_{\Gamma} (\tilde{\e} \times \n) \cdot \Big( \gamma_1 \g (\e' \times \n) + \gamma_2 \g\big(e^{-\tau b}(\int_0^1 F_3(r) e^{\tau b r} \d r + \e' \times \n)\big)\Big)\ \d\x \\
\notag
&= \int_G b^2\boldsymbol{\varepsilon} \tilde{\e} \cdot \tilde{\e} + \boldsymbol{\mu}^{-1} \CURL \tilde{\e} \cdot \CURL \tilde{\e} + s \DIV(\boldsymbol{\varepsilon} \tilde{\e})\DIV(\boldsymbol{\varepsilon} \tilde{\e})\ \dx + \\
\notag
&+ b\int_{\Gamma} (\tilde{\e} \times \n) \cdot \Big( \gamma_1 \g (\e \times \n) + \gamma_2 \g\big(e^{-\tau b}(\int_0^1 \mathbf{F}_3(r) e^{\tau b r} \d r + \e \times \n)\big)\Big)\ \d\x \\
\notag
&- b\int_{\Gamma} (\tilde{\e} \times \n) \cdot \Big( \gamma_1
\g (\e' \times \n) + \gamma_2 \g\big(e^{-\tau b}(\int_0^1
\mathbf{F}_3(r) e^{\tau b r} \d r + \e' \times \n)\big)\Big)\ \d\x.
\end{align}
The latter two integrals rewrite as
\begin{align}
\label{MONOT_EQ11}
& b \gamma_1 \int_{\Gamma} (\tilde{\e} \times \n) \cdot \big( \g (\e \times \n) - \g (\e' \times \n)\big) \ \d\x   \\
\notag & + b \gamma_2 \int_{\Gamma} (\tilde{\e} \times \n) \cdot
\Big( \g\big(e^{-\tau b}(\int_0^1 \mathbf{F}_3(r) e^{\tau b r} \d r
+ \e \times \n)\big) -  \g\big(e^{-\tau b}(\int_0^1 \mathbf{F}_3(r)
e^{\tau b r} \d r + \e' \times \n)\big) \Big)\ \d\x.
\end{align}

Utilizing Assumption \ref{ASSUMPTION_NONLINEARITY_G}, we obtain
\begin{align}
b \gamma_1 \int_{\Gamma} (\tilde{\e} \times \n) \cdot \big( \g (\e \times \n) - \g (\e' \times \n)\big) \ \d\x  \geq b \gamma_1  \int_{\Gamma}  c_1 \big| \tilde{\e} \times \n \big|^2 \ \d\x
\end{align}
and
\begin{align}
\notag
& b \gamma_2 \int_{\Gamma} (\tilde{\e} \times \n) \cdot \Big( \g\big(e^{-\tau b}(\int_0^1 \mathbf{F}_3(r) e^{\tau b r} \d r + \e \times \n )\big) -  \g\big(e^{-\tau b}(\int_0^1 \mathbf{F}_3(r) e^{\tau b r} \d r + \e' \times \n)\big) \Big)\ \d\x \\
& \geq b \gamma_2 e^{\tau b} \int_{\Gamma} c_1 \big| e^{-\tau b} \big( \tilde{\e} \times \n \big)\big|^2 \ \d\x = b \gamma_2 c_1 e^{-\tau b} \int_{\Gamma}  \big| \big( \tilde{\e} \times \n \big)\big|^2 \ \d\x.
\end{align}
Hence,
\begin{align}
\notag
\big\langle \opA \e  - \opA \e', \e - \e' \big\rangle_{W'_{\boldsymbol{\varepsilon}}\times W_{\boldsymbol{\varepsilon}}} &\geq \int_G b^2\boldsymbol{\varepsilon} \tilde{\e} \cdot \tilde{\e} + \boldsymbol{\mu}^{-1} | \CURL \tilde{\e} |^2 + s | \DIV(\boldsymbol{\varepsilon} \tilde{\e}) |^2 \ \dx \\
\notag
&+ b c_1 (\gamma_1 + e^{-\tau b}\gamma_2)  \int_{\Gamma}  c_1 \big| \tilde{\e} \times \n \big|^2 \ \d\x \\
\notag
&\geq c^{*} \| \e - \e' \|^2_{W_{\boldsymbol{\varepsilon}}}
\end{align}
for some positive $c^{\ast}$.

\textit{Hemicontinuity:} For any $\e,\e'\in W_{\boldsymbol{\varepsilon}}$, we can write
\begin{align}
\label{MONOT_EQ12}
\big\langle \opA (\e &+ t\e') , \e' \big\rangle_{W'_{\boldsymbol{\varepsilon}}\times W_{\boldsymbol{\varepsilon}}} = \ \a(\e + t \e' , \e') \\
\notag
&= \int_{G} b^2 \boldsymbol{\varepsilon}(\e + t \e') \cdot \e' + \boldsymbol{\mu}^{-1} \CURL(\e + t \e') \cdot \CURL \e' + s \DIV\big(\boldsymbol{\varepsilon}(\e + t \e')\big)\DIV(\boldsymbol{\varepsilon} \e')\ \dx \\
\notag
& + b\gamma_1\int_{\Gamma} (\e' \times \n) \cdot   \g \big((\e+t\e')\times\n \big) \d \x \\
\notag & + b \gamma_2\int_{\Gamma} (\e' \times \n) \cdot  \g \Big(
e^{-\tau b } ( \int_0^1 \mathbf{F}_3(r) e^{\tau b r} \d r + (\e +
t\e') \times \n ) \Big) \d\x.
\end{align}
On the strength of Assumption \ref{ASSUMPTION_NONLINEARITY_G}, we get the continuity of $\g(\cdot)$.
Now, by virtue of Equation (\ref{MONOT_EQ12}), the continuity of $t \mapsto \big\langle \opA (\e+t\e') , \e' \big\rangle_{W'_{\boldsymbol{\varepsilon}}\times W_{\boldsymbol{\varepsilon}}}$ follows.

\textit{Boundedness:} Suppose $\|\e\|_{W_{\boldsymbol{\varepsilon}}} \leq c$. Then,
\begin{align}
\notag
\left| \big\langle \opA \e , \e' \big\rangle_{W'_{\boldsymbol{\varepsilon}}\times W_{\boldsymbol{\varepsilon}}} \right| &= \left|\a(\e , \e')\right| \\
\notag
&\leq \int_{G} b^2 \left| \boldsymbol{\varepsilon} \e \cdot \e' \right|+  \left| \boldsymbol{\mu}^{-1} \CURL \e  \cdot \CURL \e' \right| + s \left| \DIV(\boldsymbol{\varepsilon} \e )\DIV(\boldsymbol{\varepsilon} \e')\right|\ \dx \\
\notag
&+ b\gamma_1 \int_{\Gamma} \left| (\e' \times \n) \cdot  \g \big( \e \times \n \big)\right| \d \x \\
\notag
&+ b \gamma_2\int_{\Gamma} \left| (\e' \times \n) \cdot  \g
\big( e^{-\tau b } ( \int_0^1 \mathbf{F}_3(r) e^{\tau b r} \d r + \e
\times \n ) \big) \right|\d\x.
\end{align}

Using Cauchy \& Schwarz' inequality and Assumption \ref{ASSUMPTION_NONLINEARITY_G}, we estimate
\begin{align}
\notag
  b\gamma_1 \int_{\Gamma} \big| (\e' \times \n) &\cdot \g \big( \e \times \n \big)\big| \d \x \leq  b\gamma_1  \left( \int_{\Gamma} |\e'\times \n|^2 \d\x \int_{\Gamma} |\g(\e \times \n)|^2 \d\x \right)^{1/2} \\
\notag
& \leq  b\gamma_1 c_2 \left( \int_{\Gamma} |\e'\times \n|^2 \d\x \int_{\Gamma} |\e \times \n|^2 \d\x \right)^{1/2} \\
\notag
& =  b\gamma_1 c_2  \|\e'\times \n\|_{(L^2(\Gamma))^3} \|\e \times \n\|_{(L^2(\Gamma))^3} \\
\notag & \leq b\gamma_1 c_2  \|\e'\|_{W_{\boldsymbol{\varepsilon}}} \|\e
\|_{W_{\boldsymbol{\varepsilon}}} \\
\notag & \leq b\gamma_1 c_2 c  \|\e'\|_{W_{\boldsymbol{\varepsilon}}}
\end{align}
and
\begin{align}
\notag b \gamma_2\int_{\Gamma} & \left| (\e' \times  \n)  \cdot \g
\big( e^{-\tau b } ( \int_0^1 \mathbf{F}_3(r) e^{\tau b r} \d
r + \e
\times \n ) \big) \right|\d\x \\
\notag & \leq b \gamma_2 c_2 \left( \int_{\Gamma} \left| \e' \times
\n \right|^2 \d\x  \int_{\Gamma} \left| e^{-\tau b } \big(
\int_0^1 \mathbf{F}_3(r) e^{\tau b r} \d r + \e \times \n \big)  \right|^2
\d\x \right)^{1/2} \\
\notag & \leq  b \gamma_2 c_2 e^{-\tau b } \|\e'\times
\n\|_{(L^2(\Gamma))^3}  \left(  \int_{\Gamma} 2 \big( \int_0^1
\mathbf{F}_3(r) e^{\tau b r} \d r \big)^2+ 2\left| \e \times \n \right|^2
\d\x \right)^{1/2} \\
\notag & \leq 2 b \gamma_2 c_2 e^{-\tau b }
\|\e'\|_{W_{\boldsymbol{\varepsilon}}} \left(  \|\mathcal{I} \mathbf{F}_3 \|_{(L^2(\Gamma))^3} +
\|\e\times \n\|_{(L^2(\Gamma))^3} \right) \\
\notag & \leq 2 b \gamma_2 c_2 e^{-\tau b }
\|\e'\|_{W_{\boldsymbol{\varepsilon}}} \left(  \|\mathcal{I} \mathbf{F}_3 \|_{(L^2(\Gamma))^3} + c
\right),
\end{align}
where $\mathcal{I} \mathbf{F}_3 = \int_0^1 \mathbf{F}_3(r) e^{\tau b r} \d r$.
Therefore, $\left| \big\langle \opA \e , \e' \big\rangle_{W'_{\boldsymbol{\varepsilon}}\times
W_{\boldsymbol{\varepsilon}}} \right| \leq c^* \|\e'\|_{W_{\boldsymbol{\varepsilon}}}$ for a suitable $c^{\ast}$.
Thus, $\| \opA \e\|_{W'_{\boldsymbol{\varepsilon}}} \leq c^* $ and the conclusion follows.

In summary, $\opA$ is surjective and the problem (\ref{MONOT_VARIATIONAL_PROBLEM}) possesses a (weak) solution.
Since $\opA$ is strongly monotone, the solution is unique.

\emph{Strongness of solution: }
We now prove the (weak) solution $\e \in W_{\boldsymbol{\varepsilon}}$ to
Equation (\ref{MONOT_VARIATIONAL_PROBLEM}) along with corresponding $\h,\z$ satisfy Equation (\ref{MONOT_EQ7}).

First, we show that $\DIV(\boldsymbol{\varepsilon} \e) = 0$. Following \cite{ElLaNi2002}, consider the set
\begin{align}
    D = \{ \varphi \in H_0^1(G)\ | \ \DIV(\boldsymbol{\varepsilon} \nabla \varphi) \in L^2(G)\}.
\end{align}
Letting $\e' = \nabla \varphi$ for arbitrary, but fixed $\varphi \in D$, we can rewrite Equation (\ref{MONOT_VARIATIONAL_PROBLEM}) as
\begin{align}
\notag
\int_G b^2\boldsymbol{\varepsilon} \e \cdot  \nabla \varphi +  s \DIV(\boldsymbol{\varepsilon} \e) \DIV (\boldsymbol{\varepsilon}  \nabla \varphi)\ \dx
= \int_G b\boldsymbol{\varepsilon} \mathbf{F}_1 \cdot \nabla \varphi\ \dx \text{ for any } \varphi \in D.
\end{align}
Using Green's formula, we get
\begin{align}
\label{MONOT_EQ13}
& \int_G -b^2 \DIV(\boldsymbol{\varepsilon} \e) \varphi +  s \DIV(\boldsymbol{\varepsilon} \e) \DIV (\boldsymbol{\varepsilon}  \nabla \varphi)\ \dx  = -\int_G b\DIV(\boldsymbol{\varepsilon} \mathbf{F}_1) \varphi\ \dx \text{ for any } \varphi \in D.
\end{align}
Since $(\mathbf{F}_1,\mathbf{F}_2,\mathbf{F}_3)^T \in \mathscr{H}$, it follows that $\mathbf{F}_1 \in H(\DIV_{\boldsymbol{\varepsilon}} 0, G)$. Thus, the latter integral in Equation (\ref{MONOT_EQ13}) vanishes.
Hence,
\begin{align}
\label{MONOT_EQ14}
& \int_G \DIV(\boldsymbol{\varepsilon} \e) \big( -b^2  \varphi +  s \DIV (\boldsymbol{\varepsilon}  \nabla \varphi) \big)\ \dx  = 0 \text{ for any } \varphi \in D.
\end{align}

Since the spectrum of $\DIV (\boldsymbol{\varepsilon} \nabla \cdot)$ with homogeneous Dirichlet boundary conditions is discrete,
there exists a positive number $s$ such that $b^2/s$ belongs to the resolvent set.
Then, from Equation (\ref{MONOT_EQ14}), we conclude that $\DIV(\boldsymbol{\varepsilon} \e) = 0$ holds strongly in $G$.

Therefore, Equation (\ref{MONOT_VARIATIONAL_PROBLEM}) becomes
\begin{align}
\notag
\int_G b^2\boldsymbol{\varepsilon} \e \cdot \e' &+ \boldsymbol{\mu}^{-1}  \CURL \e \cdot \CURL \e'  + b\int_{\Gamma} (\e' \times \n) \cdot \big(\gamma_1 \g(\zz{0}) + \gamma_2 \g (\zz{1})\big)\ \d\x \\
\label{MONOT_EQ15}
&= \int_G b\boldsymbol{\varepsilon} \mathbf{F}_1 \cdot \e' + \mathbf{F}_2 \cdot \CURL \e'\ \dx \text{ for any } \e' \in W_{\boldsymbol{\varepsilon}}.
\end{align}

Recalling the definition of $\h$ from Equation (\ref{MONOT_H_DEFINE}) and applying Green's formula to Equation (\ref{MONOT_EQ15}), we arrive at
\begin{align}
\notag
\int_G \boldsymbol{\varepsilon} b \e \cdot \e' + \h \cdot \CURL \e'\ \dx &+ \int_{\Gamma} (\e' \times \n) \cdot \big(\gamma_1 \g(\zz{0}) + \gamma_2 \g (\zz{1})\big)\ \d\x \\
\notag
&= \int_G \boldsymbol{\varepsilon} \mathbf{F}_1 \cdot \e'\ \dx \text{ for any } \e' \in W_{\boldsymbol{\varepsilon}}.
\end{align}
Choosing $\e' = P_{\boldsymbol{\varepsilon}} \boldsymbol{\chi}$ with $\boldsymbol{\chi}\in (\mathcal{D}(G))^3$, we get
\begin{align}
\notag
\int_G \boldsymbol{\varepsilon} b \e \cdot  P_{\boldsymbol{\varepsilon}} \boldsymbol{\chi} + \h \cdot \CURL ( P_{\boldsymbol{\varepsilon}} \boldsymbol{\chi})\ \dx   = \int_G \boldsymbol{\varepsilon} \mathbf{F}_1 \cdot  P_{\boldsymbol{\varepsilon}} \boldsymbol{\chi}\ \dx \text{ for any } \boldsymbol{\chi}\in (\mathcal{D}(G))^3
\end{align}
or, after using Green's formula,
\begin{align}
\notag
\int_G \big( \boldsymbol{\varepsilon} b \e - \CURL \h  \big)\cdot  P_{\boldsymbol{\varepsilon}} \boldsymbol{\chi}\  \dx   = \int_G \boldsymbol{\varepsilon} \mathbf{F}_1 \cdot  P_{\boldsymbol{\varepsilon}} \boldsymbol{\chi}\ \dx \text{ for any } \boldsymbol{\chi}\in (\mathcal{D}(G))^3.
\end{align}
Since $P_{\boldsymbol{\varepsilon}} (\mathcal{D}(G))^3$ is dense in $H(\DIV_{\boldsymbol{\varepsilon}} 0, G)$, there identity
\begin{equation}
\label{MONOT_EQ16}
\boldsymbol{\varepsilon} b \e - \CURL \h = \boldsymbol{\varepsilon} \mathbf{F}_1
\end{equation}
follows in the strong sense.

Choosing $\e' = P_{\boldsymbol{\varepsilon}} \boldsymbol{\chi}$ with $\boldsymbol{\chi}\in (C^{\infty}(G))^3$, we get
\begin{align}
\notag
\int_G \boldsymbol{\varepsilon} b \e \cdot  P_{\boldsymbol{\varepsilon}} \boldsymbol{\chi} &+ \h \cdot \CURL ( P_{\boldsymbol{\varepsilon}} \boldsymbol{\chi})\ \dx + \int_{\Gamma} (P_{\boldsymbol{\varepsilon}} \boldsymbol{\chi} \times \n) \cdot \big(\gamma_1 \g(\zz{0}) + \gamma_2 \g (\zz{1})\big)\ \d\x \\
\notag
&= \int_G \boldsymbol{\varepsilon} \mathbf{F}_1 \cdot  P_{\boldsymbol{\varepsilon}} \boldsymbol{\chi}\ \dx \text{ for all } \boldsymbol{\chi}\in (\mathcal{D}(G))^3.
\end{align}
Using Equation (\ref{MONOT_EQ16}), Lemma \ref{REMARK_PROPERTIES_OF_P} and Green's formula, we finally conclude
\begin{align}
\notag
\int_{\Gamma} -\big(\h \times \n\big) \cdot \boldsymbol{\chi}\ \d\x + \int_{\Gamma} \Big(\n \times \big(\gamma_1 \g(\zz{0}) + \gamma_2 \g (\zz{1})\big)\Big) \cdot \boldsymbol{\chi} \, \d\x  = 0
\end{align}
for all $\boldsymbol{\chi}\in (\mathcal{D}(G))^3$.
Thus, we have $ - \h \times \n -  \big(\gamma_1 \g(\zz{0}) + \gamma_2 \g (\zz{1})\big) \times  \n = 0$ in the strong sense.
Therefore, $(\e,\h,\z)^T \in D(\mathscr{A})$ and Equation (\ref{MONOT_EQ7}) is satisfied.
\end{proof}

\begin{theorem}
    \label{THEOREM_WELL_POSEDNESS}

    Under Assumptions \ref{ASSUMPTION_BASIC_CONDITIONS_COEFFICIENTS} and \ref{ASSUMPTION_NONLINEARITY_G}, suppose $\mathbf{V}^{0} \in \mathscr{H}$.
    Then, Equation (\ref{EQUATION_ABSTRACT_CAUCHY_PROBLEM}) possesses a unique global mild solution
    \begin{equation}
        \mathbf{V} \in C^{0}\big([0, \infty), \mathscr{H}\big). \notag
    \end{equation}
    If, moreover, $\mathbf{V}^{0} \in D(\mathscr{A})$, the mild solution $\mathbf{V}$ is a strong solution satisfying
    \begin{equation}
        \mathbf{V} \in W^{1, \infty}_{\mathrm{loc}}(0, \infty; \mathscr{H}) \cap
        L^{\infty}_{\mathrm{loc}}\big(0, \infty; D(\mathscr{A})\big). \notag
    \end{equation}
\end{theorem}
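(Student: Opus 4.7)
The statement is essentially a corollary of the nonlinear semigroup machinery once Lemmas \ref{LEMMA_DENSITY} and \ref{LEMMA_MONOTONICITY} are in hand. The plan is to recast Equation (\ref{EQUATION_ABSTRACT_CAUCHY_PROBLEM}) as an abstract evolution governed by a quasi-m-accretive operator on a Hilbert space and then invoke standard results of Br\'ezis/K\={o}mura.

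First, I would record that by Lemma \ref{LEMMA_MONOTONICITY} the shifted operator $\mathscr{A} + C\operatorname{id}$ is maximal monotone on $\mathscr{H}$ (with respect to the equivalent inner product $\langle \cdot,\cdot\rangle_{\tilde{\mathscr{H}}}$, hence also with respect to $\langle\cdot,\cdot\rangle_{\mathscr{H}}$), and by Lemma \ref{LEMMA_DENSITY} its domain $D(\mathscr{A})$ is dense in $\mathscr{H}$. Equivalently, $\mathscr{A}$ is quasi-m-accretive with constant $C$.

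Next, I would apply the Crandall--Liggett generation theorem (in the Hilbert-space form due to K\={o}mura) to the operator $\mathscr{A}$. This yields a strongly continuous semigroup $\{S(t)\}_{t\geq 0}$ of (quasi-)contractions on $\mathscr{H}$ satisfying $\|S(t)\mathbf{V}^0 - S(t)\tilde{\mathbf{V}}^0\|_{\mathscr{H}} \leq M e^{Ct}\|\mathbf{V}^0 - \tilde{\mathbf{V}}^0\|_{\mathscr{H}}$, where $M$ accounts for the passage between $\|\cdot\|_{\mathscr{H}}$ and $\|\cdot\|_{\tilde{\mathscr{H}}}$. For every $\mathbf{V}^0 \in \mathscr{H}$, the curve $\mathbf{V}(t) := S(t)\mathbf{V}^0$ is by construction the unique mild solution of (\ref{EQUATION_ABSTRACT_CAUCHY_PROBLEM}) in $C^{0}([0,\infty);\mathscr{H})$; uniqueness also follows directly from the accretivity of $\mathscr{A} + C\operatorname{id}$ via a Gronwall argument on $\|\mathbf{V}(t)-\tilde{\mathbf{V}}(t)\|^2_{\tilde{\mathscr{H}}}$.

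For the regularity statement, I would invoke Br\'ezis's theorem on maximal monotone operators in Hilbert spaces: if $\mathbf{V}^0 \in D(\mathscr{A}) = D(\mathscr{A}+C\operatorname{id})$, then the mild solution is a strong solution, the map $t \mapsto \mathbf{V}(t)$ is Lipschitz continuous on every bounded subinterval of $[0,\infty)$ with values in $\mathscr{H}$, one has $\mathbf{V}(t) \in D(\mathscr{A})$ for all $t\geq 0$, the right derivative $\partial_t^{+}\mathbf{V}(t)$ exists for every $t\geq 0$, and $\partial_t \mathbf{V}(t) + \mathscr{A}(\mathbf{V}(t)) = \mathbf{0}$ holds for a.e.\ $t>0$. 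In particular, $\mathbf{V}\in W^{1,\infty}_{\mathrm{loc}}(0,\infty;\mathscr{H})$, and since $\mathscr{A}\mathbf{V}(t) = -\partial_t\mathbf{V}(t)$ is essentially bounded on bounded intervals, $\mathbf{V}\in L^{\infty}_{\mathrm{loc}}(0,\infty;D(\mathscr{A}))$, as claimed.

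I do not expect any real obstacle: both heavy lifting steps have already been done in Lemmas \ref{LEMMA_DENSITY} and \ref{LEMMA_MONOTONICITY}, and the remainder is a direct citation. The only minor bookkeeping points are that one has to work with the equivalent inner product $\langle\cdot,\cdot\rangle_{\tilde{\mathscr{H}}}$ when invoking m-accretivity (since that is where monotonicity was proved) and then translate the conclusion back to $\langle\cdot,\cdot\rangle_{\mathscr{H}}$; this is harmless because the two norms are topologically equivalent, so the semigroup, the domain of the generator, and the regularity spaces coincide.
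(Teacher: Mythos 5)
Your proposal is correct and follows essentially the same route as the paper: both rest entirely on Lemma \ref{LEMMA_MONOTONICITY} (maximal monotonicity of $C\operatorname{id}+\mathscr{A}$) and Lemma \ref{LEMMA_DENSITY} (density of $D(\mathscr{A})$), and then invoke the standard generation and regularity theorems for quasi-m-accretive operators in Hilbert spaces (the paper cites Barbu's Corollary 4.1 and Theorem 4.5, which are the K\={o}mura/Br\'ezis results you name). Your additional remarks on the equivalent inner product and the Gronwall uniqueness argument are consistent with, and slightly more explicit than, the paper's two-line proof.
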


\begin{proof}
    Since the operator $C \operatorname{id} + \mathcal{A}$ is maximally monotone for a sufficiently large $C > 0$,
    using \cite[Corollary 4.1]{Ba2010}, any initial value $\mathbf{V}^{0} \in \overline{D(\mathscr{A})}$ admits a unique mild solution.
    By virtue of Lemma \ref{LEMMA_DENSITY}, this remains true for $\mathbf{V}^{0} \in \mathscr{H}$.
    As for the strong solution, \cite[Theorem 4.5]{Ba2010} applies.
\end{proof}

\begin{remark}
    In contrast to Datko's `counterexamples' of destabilizing boundary delays,
    our nonlinear system (\ref{EQUATION_MAXWELL_1})--(\ref{EQUATION_MAXWELL_IC_HISTORY}) as well as its linearization studied by Nicaise \& Pignotti \cite{NiPi2007}
    are well-posed for two basic reasons:
    1) the boundary conditions involve instanteneous terms of matching order and has a correct sign;
    2) the orders of the delayed and the instantaneous terms are not too high.
    Indeed, adopting the step method commonly used for difference-differential equations, 
    a delayed system can only be well-posed if the delay operator constitutes an `admissible control operator'
    as widely applied in infinite-dimensional control theory.
    In this sense, systems with maximal $L^{p}$-regularity (which Equations (\ref{EQUATION_MAXWELL_1})--(\ref{EQUATION_MAXWELL_IC_HISTORY}) are lacking)
    subject to `strong' delay can behave completely differently from those without this important property.
\end{remark}

\section{Exponential Stability}
\label{SECTION_EXPONENTIAL_STABILITY}

Our thrust is to prove the exponential stability for Equations (\ref{EQUATION_MAXWELL_1})--(\ref{EQUATION_MAXWELL_IC_HISTORY}).
To this end, we consider the ``natural energy'' functional
\begin{align}
    E(t) := \tfrac{1}{2} \big\|\mathbf{V}\big\|_{\mathscr{H}}^{2} \equiv
    \frac{1}{2} \int_{G} \big|\mathbf{E}(t, \cdot)\big|^{2} \mathrm{d}\mathbf{x} +
    \frac{1}{2} \int_{G} \big|\mathbf{H}(t, \cdot)\big|^{2} \mathrm{d}\mathbf{x} +
    \tau \int_{0}^{1} \int_{\Gamma} \big|\mathbf{E}(t - \tau s, \mathbf{x}) \times \boldsymbol{\nu}\big|^{2}
    \mathrm{d}\mathbf{x} \mathrm{d}s. \notag
\end{align}
In the following, we apply a combination of Rellich's multiplier techniques developed for boundary control problems
along with Lyapunov's techniques for delay differential equations in the spirit of \cite{KhuPoRa2015}.

For $\mathbf{x}_{0} \in \mathbb{R}^{3}$, consider the vector field $\mathbf{m}(\mathbf{x}) := \mathbf{x} - \mathbf{x}_{0}$.
\begin{assumption}[Regularity and geometric conditions]
    \label{ASSUMPTION_GEOMETRY}

    Suppose the following conditions are satisfied:
    \begin{enumerate}
        \item $G$ is a bounded $C^{2}$-domain.

        \item $G$ is strictly star-shaped with respect to $\mathbf{x}_{0} \in G$, i.e.,
        \begin{equation}
            \mathbf{m}(\mathbf{x}) \cdot \boldsymbol{\nu}(\mathbf{x}) > 0 \text{ for } \mathbf{x} \in \Gamma.
            \label{EQUATION_GEOMETRIC_CONDITION}
        \end{equation}

        \item $\boldsymbol{\varepsilon}, \boldsymbol{\mu} \in C^{1}(\bar{G}, \mathbb{R}^{3 \times 3})$.

        \item There exists a constant $d_1 > 0$ such that
        \begin{equation}
         \label{STABILITY_ESTIMATIONS_FOR_M}
                \boldsymbol{\varepsilon} + (\mathbf{m} \cdot \nabla) \boldsymbol{\varepsilon} \geq d_{1} \boldsymbol{\varepsilon} \quad \text{ and } \quad
                \boldsymbol{\mu} + (\mathbf{m} \cdot \nabla) \boldsymbol{\mu} \geq d_{1} \boldsymbol{\mu} \text{ in } \bar{G},
        \end{equation}
    \end{enumerate}
\end{assumption}

\begin{remark}
    Inequalities (\ref{STABILITY_ESTIMATIONS_FOR_M}) are mathematical assumptions on the physical nature of the medium (cf. \cite{El2007})
    and the geometry of the domain $G$ -- the latter inasmuch as the function $\mathbf{m}(\cdot)$ is involved.
    Similar conditions are imposed in \cite{ElMa2002, Kri2007}, etc.
    In case both $\boldsymbol{\varepsilon}$ and $\boldsymbol{\mu}$ are scalar and constant (or ``nearly'' constant),
    this corresponds to the ``strict star-shapedness'' with respect to $\mathbf{x}_{0}$ (see, e.g., \cite[p. 48]{Ko1994}).
    In particular, all convex domains are strictly star-shaped.
    Hence, the geometry class is non-trivial.
\end{remark}

Consider a new functional
\begin{align}
\label{STABILITY_XI_ENERGY}
    E_\xi(t) :=     \frac{1}{2} \int_{G} \big|\mathbf{E}(t, \cdot)\big|^{2} \mathrm{d}\mathbf{x} +
    \frac{1}{2} \int_{G} \big|\mathbf{H}(t, \cdot)\big|^{2} \mathrm{d}\mathbf{x} +
    \xi \tau \int_{0}^{1} \int_{\Gamma} \big|\mathbf{E}(t - \tau s, \mathbf{x}) \times \boldsymbol{\nu}\big|^{2}
    \mathrm{d}\mathbf{x} \mathrm{d}s, \notag
\end{align}
where $\xi$ is a positive number such that
\begin{equation}
\label{STABILITY_CONDITION_ON_XI}
\gamma_{1} c_1 - \frac{\gamma_2  c_2 }{2} > \xi > \frac{\gamma_2 c_2}{2}.
\end{equation}
Obviously, $\xi$ exists if $\gamma_1 c_1 > \gamma_2 c_2$.

\begin{lemma}
\label{STABILITY_ENERGY_DUMPING_INEQUALITY}
Suppose $\gamma_1 c_1 > \gamma_2 c_2$. Then, there exist positive numbers $c_1^E, c_2^E$ such that for all $t_2 > t_1 \geq 0$ the following inequality holds
\begin{align}
- c_1^E \int_{t_1}^{t_2} \int_{\Gamma} \zz{0}^2 + \zz{1}^2 \d\x \d t \ge E_{\xi}(t_2) - E_{\xi}(t_1) \ge - c_2^E \int_{t_1}^{t_2} \int_{\Gamma} \zz{0}^2 + \zz{1}^2 \d\x \d t,
\end{align}
where $(\e,\h,\z)^T$ is a strong solution of Equation (\ref{EQUATION_ABSTRACT_CAUCHY_PROBLEM}).
\end{lemma}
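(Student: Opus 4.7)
The plan is to test Equations (\ref{EQUATION_MAXWELL_1})--(\ref{EQUATION_MAXWELL_IC_HISTORY}) along a strong solution, differentiate $E_\xi(t)$, reduce the resulting expression to a quadratic form in $\zz{0}$ and $\zz{1}$, and show that the choice of $\xi$ in (\ref{STABILITY_CONDITION_ON_XI}) renders this form two-sidedly bounded by $\int_\Gamma(|\zz{0}|^2 + |\zz{1}|^2)\,\d\x$.

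First, I would handle the Maxwell part. Using Equations (\ref{EQUATION_MAXWELL_1})--(\ref{EQUATION_MAXWELL_2}) followed by Lemma \ref{LEMMA_GREENS_FORMULA} yields
\[\frac{d}{dt}\,\tfrac{1}{2}\!\int_G \big(\boldsymbol{\varepsilon}\e\cdot\e + \boldsymbol{\mu}\h\cdot\h\big)\,\dx \;=\; -\int_{\Gamma} (\h\times\n)\cdot\e\,\d\x.\]
Inserting the boundary condition (\ref{EQUATION_MAXWELL_BC_1}), exploiting the scalar triple-product identity $(\mathbf{a}\times\n)\cdot\e = -\mathbf{a}\cdot(\e\times\n)$, and using the compatibility $\zz{0} = \e\times\n$ on $\Gamma$ as in the monotonicity argument for Lemma \ref{LEMMA_MONOTONICITY} converts the above boundary integral into
\[-\gamma_1\!\int_\Gamma \g(\zz{0})\cdot\zz{0}\,\d\x \;-\; \gamma_2\!\int_\Gamma \g(\zz{1})\cdot\zz{0}\,\d\x.\]
For the memory part, since $\z(t,s)=\e(t-\tau s)\times\n$ yields $\partial_t\z = -\tau^{-1}\partial_s\z$ for strong solutions, integration by parts in $s$ produces
\[\frac{d}{dt}\,\xi\tau\!\int_0^1\!\!\int_\Gamma |\z|^2\,\d\x\,\d s \;=\; \xi\!\int_\Gamma |\zz{0}|^2\,\d\x \;-\; \xi\!\int_\Gamma |\zz{1}|^2\,\d\x.\]

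Step three assembles and estimates. Assumption \ref{ASSUMPTION_NONLINEARITY_G}(ii) with $\g(\mathbf{0})=\mathbf{0}$ gives $c_1|\zz{0}|^2 \leq \g(\zz{0})\cdot\zz{0}$, while Assumption \ref{ASSUMPTION_NONLINEARITY_G}(iii) gives the reverse bound by $c_2|\zz{0}|^2$. The cross term $\int_\Gamma \g(\zz{1})\cdot\zz{0}\,\d\x$ is controlled on both sides by Cauchy \& Schwarz' inequality, Lipschitz continuity of $\g$, and Young's inequality as $|\int_\Gamma \g(\zz{1})\cdot\zz{0}| \leq \tfrac{c_2}{2}\int_\Gamma(|\zz{0}|^2+|\zz{1}|^2)\,\d\x$. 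Combining all contributions,
\[\tfrac{d}{dt}E_\xi(t) \;\leq\; -\big(\gamma_1 c_1 - \tfrac{\gamma_2 c_2}{2} - \xi\big)\!\int_\Gamma |\zz{0}|^2\,\d\x \;-\; \big(\xi - \tfrac{\gamma_2 c_2}{2}\big)\!\int_\Gamma |\zz{1}|^2\,\d\x,\]
and by the same computation with reversed inequalities, $\tfrac{d}{dt}E_\xi(t) \geq -C\int_\Gamma (|\zz{0}|^2+|\zz{1}|^2)\,\d\x$ for some finite $C>0$. The choice (\ref{STABILITY_CONDITION_ON_XI}) is precisely what makes both coefficients on the upper bound strictly positive, producing the constant $c_1^E>0$; the lower-bound constant $c_2^E>0$ comes for free. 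Integrating in $t$ from $t_1$ to $t_2$ concludes.

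The main obstacle is the sign balancing in Step three. The ``back-flow'' $+\xi\int|\zz{0}|^2$ from the memory boundary term and the Young-split penalty $\tfrac{\gamma_2 c_2}{2}(|\zz{0}|^2+|\zz{1}|^2)$ from the delayed feedback both erode the dissipation $-\gamma_1 c_1\int|\zz{0}|^2$ coming from the instantaneous damping. The admissible window (\ref{STABILITY_CONDITION_ON_XI}) is exactly the region where the dissipation dominates on both endpoints simultaneously, and its non-emptiness is in turn equivalent to the smallness hypothesis $\gamma_1 c_1 > \gamma_2 c_2$.
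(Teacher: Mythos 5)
Your proposal is correct and follows essentially the same route as the paper's proof: the energy identity via Green's formula and the boundary condition, the transport relation $\tau\partial_t\mathbf{Z}+\partial_s\mathbf{Z}=\mathbf{0}$ to evaluate the delay contribution as $\xi\int_\Gamma(\zz{0}^2-\zz{1}^2)\,\d\x$, and the monotonicity/Lipschitz bounds with Young's inequality leading to exactly the coefficients $\gamma_1c_1-\tfrac{\gamma_2c_2}{2}-\xi$ and $\xi-\tfrac{\gamma_2c_2}{2}$ controlled by the window (\ref{STABILITY_CONDITION_ON_XI}). The only cosmetic difference is that you differentiate $E_\xi$ pointwise in $t$ and integrate at the end, whereas the paper works directly with the integrated difference $E_\xi(t_2)-E_\xi(t_1)$.
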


\begin{proof}
Similar to \cite[Lemma 2.7]{ElLaNi2002}, multiplying Equations
(\ref{EQUATION_MAXWELL_1}) and (\ref{EQUATION_MAXWELL_2}) in
$L^{2}\big(0, T; (L^{2}(G))^{3}\big)$ with $\mathbf{E}$ and
$\mathbf{H}$, respectively, integrating by parts and using the
boundary condition from Equation (\ref{EQUATION_MAXWELL_BC_1}), we
get
\begin{align}
\label{STABILITY_EQ_1}
E_{\xi}(t_2) - E_{\xi}(t_1) & = - \int_{t_1}^{t_2} \int_{\Gamma} \big( \gamma_{1} \g( \zz{0} ) + \gamma_2 \g(\zz{1}) \big) \cdot  \big(\e(t, \cdot) \times \n\big) \dx \, \d t \\
\notag
& + \xi \tau \int_{t_1}^{t_2} \int_0^1 \int_{\Gamma} 2 \big( \e(t-\tau s ,\cdot) \times \n \big) \cdot \partial_{t} \big( \e(t - \tau s,\cdot) \times \n \big)   \dx \d s \d t.
\end{align}
Recalling
\begin{equation}
\mathbf{Z}(t, s, \cdot) = \mathbf{E}(t - \tau s, \cdot) \times \boldsymbol{\nu} \text{ for } s \in [0, 1] \notag
\end{equation}
and following \cite{KhuPoRa2015}, we obtain
\begin{equation}
\tau \partial_{t} \mathbf{Z}(t, s, \cdot) + \partial_{s} \mathbf{Z}(t, s, \cdot) = \mathbf{0} \text{ for } (t, s) \in (0, \infty) \times (0, 1). \notag
\end{equation}
Therefore,
\begin{align}
\notag
 \xi \tau \int_0^1 \int_{\Gamma} 2 \big( \e(t-\tau s ,\cdot) \times \n \big) & \cdot \partial_{t} \big( \e(t - \tau s,\cdot) \times \n \big)   \d\x \d s \\
\notag
& =  - \xi \int_0^1 \int_{\Gamma} 2 \big( \e(t-\tau s ,\cdot) \times \n \big) \cdot  \partial_{s} \big( \e(t - \tau s,\cdot) \times \n \big)   \d\x \d s \\
\notag
& =  - \xi \int_0^1 \int_{\Gamma}   \partial_{s} \big| \e(t - \tau s,\cdot) \times \n \big|^2   \d\x \d s \\
\notag
& =  - \xi  \int_{\Gamma} \left.   \big| \e(t - \tau s,\cdot) \times \n \big|^2 \right|_{s=0}^{s=1}  \d\x \\
\notag
& =   \xi  \int_{\Gamma} \zz{0}^2 - \zz{1}^2  \d\x.
\end{align}
After plugging the latter identity into Equation (\ref{STABILITY_EQ_1}), we arrive at
\begin{align}
\label{STABILITY_EQ_2}
E_{\xi}(t_2) - E_{\xi}(t_1) & = - \int_{t_1}^{t_2} \int_{\Gamma} \big( \gamma_{1} \g( \zz{0} ) + \gamma_2 \g(\zz{1}) \big) \cdot  \zz{0} \dx \, \d t \\
\notag
& + \xi  \int_{t_1}^{t_2}  \int_{\Gamma} \zz{0}^2 - \zz{1}^2   \d\x  \d t.
\end{align}

Using Assumption \ref{ASSUMPTION_NONLINEARITY_G} and Young's inequality, we get
\begin{align}
\label{STABILITY_EQ_3}
\int_{\Gamma} \g(\zz{0}) \cdot \zz{0} \d\x & \geq c_1 \int_{\Gamma} \zz{0}^2 \d\x \text{ and } \\
\notag
\int_{\Gamma} \g(\zz{1}) \cdot \zz{0} \d\x & \geq  -c_2\int_{\Gamma} \left| \zz{1} \cdot \zz{0} \right| \d\x  \\
\label{STABILITY_EQ_4}
& \geq -\frac{c_2}{2}  \int_{\Gamma} \zz{1}^2 \d\x - \frac{c_2}{2} \int_{\Gamma} \zz{0}^2 \d\x.
\end{align}
Then, Equation (\ref{STABILITY_EQ_2}) can further be estimated as follows:
\begin{align}
\notag
E_{\xi}(t_2) - E_{\xi}(t_1)  \leq &  \int_{t_1}^{t_2} \left(  -\gamma_{1} c_1\int_{\Gamma}   \zz{0}^2 \d\x + \frac{\gamma_2 c_2}{2} \int_{\Gamma} \zz{1}^2 \d\x + \frac{\gamma_2 c_2}{2} \int_{\Gamma}  \zz{0}^2 \d \x \right) \d t \\
\notag
&+ \xi  \int_{t_1}^{t_2}  \int_{\Gamma} \zz{0}^2 - \zz{1}^2   \d\x  \d t \\
\label{STABILITY_EQ_5}
&= -  (\gamma_{1} c_1 - \frac{\gamma_2  c_2 }{2} -\xi) \int_{t_1}^{t_2} \int_{\Gamma}   \zz{0}^2 \d\x \d t - (\xi-\frac{\gamma_2 c_2}{2}) \int_{t_1}^{t_2} \int_{\Gamma} \zz{1}^2 \d\x   \d t.
\end{align}
Since $\xi$ is selected to satisfy Equation (\ref{STABILITY_CONDITION_ON_XI}), we arrive at
\begin{align}
\notag
E_{\xi}(t_2) - E_{\xi}(t_1) & \leq - c_1^E  \int_{t_1}^{t_2} \int_{\Gamma}\zz{0}^2 + \zz{1}^2 \d\x \d t.
\end{align}

On the other hand,
\begin{align}
\label{STABILITY_EQ_6}
\left| \int_{\Gamma} \g(\zz{0}) \cdot \zz{0} \d\x \right| & \leq c_2 \int_{\Gamma} \zz{0}^2 \d\x \text{ and } \\
\notag
\left| \int_{\Gamma} \g(\zz{1}) \cdot \zz{0} \d\x \right| & \leq  c_2\int_{\Gamma} \left| \zz{1} \cdot \zz{0} \right| \d\x  \\
\label{STABILITY_EQ_7}
&\leq \frac{c_2}{2}  \int_{\Gamma} \zz{1}^2 \d\x + \frac{c_2}{2} \int_{\Gamma} \zz{0}^2 \d\x.
\end{align}
Thus,
\begin{align}
\notag
E_{\xi}(t_2) - E_{\xi}(t_1) &\geq - \left| \int_{t_1}^{t_2} \int_{\Gamma} \big( \gamma_{1} \g( \zz{0} ) + \gamma_2 \g(\zz{1}) \big) \cdot  \zz{0}  \dx \, \d t \right|  \\
\notag
&- \xi \left| \int_{t_1}^{t_2}  \int_{\Gamma} \zz{0}^2 - \zz{1}^2   \d\x  \d t \right| \\
\notag
&\geq \int_{t_1}^{t_2} \left(-  \gamma_{1} c_2\int_{\Gamma}   \zz{0}^2 \d\x - \frac{\gamma_2 c_2}{2} \int_{\Gamma} \zz{1}^2 \d\x - \frac{\gamma_2 c_2}{2} \int_{\Gamma}  \zz{0}^2 \d \x \right) \d t \\
\notag
&- \xi  \int_{t_1}^{t_2}  \int_{\Gamma} \zz{0}^2  \d\x  \d t - \xi  \int_{t_1}^{t_2}  \int_{\Gamma} \zz{1}^2   \d\x  \d t \\
\notag
&\geq -  c_2^E \int_{t_1}^{t_2} \int_{\Gamma}   \zz{0}^2 + \zz{1}^2 \d\x \d t,
\end{align}
which finishes the proof.
\end{proof}

\begin{lemma}
\label{STABILITY_LEMMA_2}
There exist positive numbers $c, c_T$ such that the estimate
\begin{equation}
\int_{0}^{T} E_{\xi} (t)\ \d t \le c \big(E_{\xi}(0) +
E_{\xi}(T)\big) + c_T \int_{0}^{T} \int_{\Gamma} \zz{0}^2 +
\zz{1}^2\ \d\x \d t
\end{equation}
holds true for every $T > 0$ along any strong solution $(\e,\h,\z)^T$ of Equation (\ref{EQUATION_ABSTRACT_CAUCHY_PROBLEM}).
\end{lemma}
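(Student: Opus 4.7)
The statement is an observability-through-damping estimate whose role is to feed the ``folklore'' argument in the Appendix together with Lemma \ref{STABILITY_ENERGY_DUMPING_INEQUALITY} to produce exponential stability. I would combine the Rellich multiplier technique for the Maxwell subsystem, in the spirit of \cite{ElLaNi2002, El2007}, with a Lyapunov-type auxiliary functional for the transport/history component $\z$, in the spirit of \cite{KhuPoRa2015, NiPi2007}. By density and Theorem \ref{THEOREM_WELL_POSEDNESS}, all manipulations can first be carried out on strong solutions and then passed to the limit.

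\textbf{Step 1 (Rellich multiplier for the field part).} Set $\m(\x) = \x - \mathbf{x}_0$. I test $\partial_t(\boldsymbol{\varepsilon}\e) - \CURL\h = \mathbf{0}$ against $\m \times \h$ and $\partial_t(\boldsymbol{\mu}\h) + \CURL\e = \mathbf{0}$ against $\m \times \e$ on $(0,T)\times G$ and integrate by parts, using Lemma \ref{LEMMA_GREENS_FORMULA} together with the identity $\CURL(\m \times \mathbf{u}) = 2\mathbf{u} + (\m\cdot\nabla)\mathbf{u} - \m\,\DIV\mathbf{u}$ applied to $\mathbf{u} \in \{\e, \h\}$, and the divergence-free constraints from (\ref{EQUATION_MAXWELL_1})--(\ref{EQUATION_MAXWELL_2}). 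Exploiting (\ref{STABILITY_ESTIMATIONS_FOR_M}) for the bulk and separating out time-boundary data at $t=0,T$, one obtains a coercive lower bound of the schematic form
\begin{equation*}
    d_1 \int_0^T \!\!\!\int_G \big(\boldsymbol{\varepsilon}\e\cdot\e + \boldsymbol{\mu}\h\cdot\h\big)\,\dx\,\d t \;\leq\; c\bigl(E_\xi(0) + E_\xi(T)\bigr) + \mathcal{B}_T,
\end{equation*}
where $\mathcal{B}_T$ collects surface integrals on $(0,T)\times\Gamma$ involving the tangential traces $\e\times\n$, $\h\times\n$, weighted by $\m\cdot\n$.

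\textbf{Step 2 (Boundary and history).} By Assumption \ref{ASSUMPTION_GEOMETRY}(2) one has $\m\cdot\n > 0$ on $\Gamma$, so the geometrically ``bad'' surface terms in $\mathcal{B}_T$ are controlled by the tangential traces. Invoking (\ref{EQUATION_MAXWELL_BC_1}) together with the Lipschitz part of Assumption \ref{ASSUMPTION_NONLINEARITY_G} yields $|\h\times\n|^2 \leq c\bigl(|\zz{0}|^2 + |\zz{1}|^2\bigr)$ on $\Gamma$, while $\e\times\n = \zz{0}$ by the definition of $D(\mathscr{A})$. This absorbs $\mathcal{B}_T$ into $c_T \int_0^T \!\int_\Gamma (|\zz{0}|^2 + |\zz{1}|^2)\,\dx\,\d t$, giving the required bound on the field part of $\int_0^T E_\xi(t)\,\d t$. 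To control the remaining history contribution $\xi\tau\!\int_0^T\!\int_0^1\!\int_\Gamma|\z|^2$, introduce
\begin{equation*}
    L_1(t) := \xi\tau \int_0^1 \int_\Gamma (1-s)\,|\z(t,s,\x)|^2\,\dx\,\d s \;\leq\; E_\xi(t).
\end{equation*}
Differentiating and using the transport identity $\tau\partial_t\z + \partial_s\z = \mathbf{0}$ (cf. Lemma \ref{STABILITY_ENERGY_DUMPING_INEQUALITY}), an integration by parts in $s$ yields
\begin{equation*}
    \dot L_1(t) = \xi \int_\Gamma |\zz{0}|^2\,\dx \;-\; \xi \int_0^1 \int_\Gamma |\z|^2\,\dx\,\d s.
\end{equation*}
Multiplying by $\tau$, integrating over $(0,T)$ and using $L_1(T) \geq 0$, $L_1(0) \leq E_\xi(0)$ bounds the history part of $\int_0^T E_\xi(t)\,\d t$ by $\tau E_\xi(0) + \xi\tau\int_0^T\!\int_\Gamma|\zz{0}|^2\,\dx\,\d t$. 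Adding the two bounds finishes the proof.

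\textbf{Main obstacle.} The heaviest bookkeeping is the Rellich identity in Step 1: the terms generated by $(\m\cdot\nabla)\boldsymbol{\varepsilon}$ and $(\m\cdot\nabla)\boldsymbol{\mu}$ must combine with the ``diagonal'' contributions so that precisely the lower bound encoded by (\ref{STABILITY_ESTIMATIONS_FOR_M}) emerges, and this must happen without destroying the structure of $\mathcal{B}_T$ that is needed in Step 2. A secondary technical point is that the pairing $(\h\times\n)\cdot\e$ is a priori only a $H^{-1/2}$--$H^{1/2}$ duality, so the reduction to an $L^2(\Gamma)$-estimate must be justified either by an approximation argument through $D(\mathscr{A})$ or by exploiting the extra tangential regularity granted by the boundary condition and Assumption \ref{ASSUMPTION_NONLINEARITY_G}, before one is allowed to bound $|\h\times\n|^2$ by $|\zz{0}|^2 + |\zz{1}|^2$.
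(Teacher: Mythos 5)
Your proposal follows essentially the same route as the paper: a Rellich multiplier identity for the field part, strict star-shapedness to give the $\m \cdot \n$ surface term a favorable sign, the boundary condition together with the Lipschitz bound on $\g$ to convert $|\h \times \n|^{2}$ into $|\zz{0}|^{2} + |\zz{1}|^{2}$ (with $\e \times \n = \zz{0}$ by the definition of $D(\mathscr{A})$), and a separate argument for the history integral. The one place you genuinely diverge is the history term: you introduce the weighted functional $L_{1}(t) = \xi\tau \int_{0}^{1}\int_{\Gamma}(1-s)|\z|^{2}\,\d\x\,\d s$ and differentiate it along the transport identity, whereas the paper substitutes $u = t - \tau s$ directly in $\int_{0}^{T}\int_{0}^{1}\int_{\Gamma}|\e(t-\tau s)\times\n|^{2}$ and bounds the result by $\int_{0}^{T}\int_{\Gamma}\zz{0}^{2}+\zz{1}^{2}\,\d\x\,\d t$. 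Both are correct (yours even avoids the $\zz{1}$ contribution), and your extra $\tau E_{\xi}(0)$ is harmlessly absorbed into $c\big(E_{\xi}(0)+E_{\xi}(T)\big)$. One caution on your Step 1: the multipliers should be $\m \times (\boldsymbol{\varepsilon}\e)$ and $\m \times (\boldsymbol{\mu}\h)$, as in the paper following \cite{El2007}, not the bare $\m\times\e$ and $\m\times\h$. With the bare multipliers, the identity $\CURL(\m\times\mathbf{u}) = 2\mathbf{u} + (\m\cdot\nabla)\mathbf{u} - \m\,\DIV\mathbf{u}$ produces $\DIV\e$ and $\DIV\h$, which are \emph{not} zero for heterogeneous anisotropic media (only $\DIV(\boldsymbol{\varepsilon}\e) = \DIV(\boldsymbol{\mu}\h) = 0$ hold), and the two time-derivative contributions do not telescope into the $t = 0, T$ terms. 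With the correct multipliers the bulk terms produce exactly $\boldsymbol{\varepsilon} + (\m\cdot\nabla)\boldsymbol{\varepsilon}$ and $\boldsymbol{\mu} + (\m\cdot\nabla)\boldsymbol{\mu}$, which is precisely where (\ref{STABILITY_ESTIMATIONS_FOR_M}) enters. Your final remark about the $H^{-1/2}$--$H^{1/2}$ pairing is legitimate but already settled here, since the domain $D(\mathscr{A})$ requires $\e\times\n|_{\Gamma}, \h\times\n|_{\Gamma} \in (L^{2}(\Gamma))^{3}$ for strong solutions.
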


\begin{proof}
Similar to \cite[Section 3.1, pp. 193--195]{El2007}, using Rellich's multipliers
$\mathbf{m} \times (\boldsymbol{\varepsilon} \mathbf{E})$ and $\mathbf{m} \times (\boldsymbol{\mu} \mathbf{H})$, we obtain
\begin{align}
\label{STABILITY_EQ_15}
\begin{split}
\frac{1}{2} \int_{0}^{T} \int_{G} &\big( \boldsymbol{\varepsilon} + (\m \cdot \nabla ) \boldsymbol{\varepsilon} \big)\e \cdot \e +  \big( \boldsymbol{\mu} + (\m \cdot \nabla ) \boldsymbol{\mu} \big)\h \cdot \h  \dx \d t \\
&= -\frac{1}{2}\int_0^T \int_\Gamma \n \cdot \m \big( \boldsymbol{\mu}\h \cdot \h + \boldsymbol{\varepsilon} \e \cdot \e \big) \d\x\d t \\
&+ \int_0^T \int_\Gamma \big( \n \times \e \big)\cdot \big( \m \times \boldsymbol{\varepsilon} \e \big) \ \d\x\d t + \int_0^T \int_\Gamma \big( \n \times \h \big)\cdot \big( \m \times \boldsymbol{\mu} \h \big) \ \d\x\d t \\
&+ \int_G \big( \m \times \boldsymbol{\varepsilon} \e(T) \big) \cdot \big(\boldsymbol{\mu} \h(T) \big)\dx -  \int_G \big( \m \times \boldsymbol{\varepsilon} \e(0) \big) \cdot \big(\boldsymbol{\mu} \h(0) \big)\dx.
\end{split}
\end{align}
The left-hand side can be estimated using inequalities in Equation (\ref{STABILITY_ESTIMATIONS_FOR_M}) as
\begin{align}
\notag
\frac{1}{2} \int_{0}^{T} \int_{G} &\big( \boldsymbol{\varepsilon} + (\m \cdot \nabla ) \boldsymbol{\varepsilon} \big)\e \cdot \e +  \big( \boldsymbol{\mu} + (\m \cdot \nabla ) \boldsymbol{\mu} \big)\h \cdot \h \ \dx \d t  \geq \frac{d_1}{2} \int_{0}^{T} \int_{G} \boldsymbol{\varepsilon} \e \cdot \e + \boldsymbol{\mu} \h \cdot \h \d\x \d t.
\end{align}
From Assumption \ref{ASSUMPTION_BASIC_CONDITIONS_COEFFICIENTS} and Equation (\ref{ASSUMPTIONS_ALPHA}),
we get $\boldsymbol{\varepsilon} \e \cdot \e \ge \alpha |\e|^2,\ \boldsymbol{\mu} \e \cdot \e \ge \alpha |\e|^2$ for all $E\in \mathbb{R}^3$. Therefore,
\begin{align}
\label{STABILITY_EQ_8}
\begin{split}
\frac{1}{2} \int_{0}^{T} \int_{G} \big( \boldsymbol{\varepsilon} + (\m \cdot \nabla ) \boldsymbol{\varepsilon} \big)\e \cdot \e &+ \big( \boldsymbol{\mu} + (\m \cdot \nabla ) \boldsymbol{\mu} \big)\h \cdot \h\ \dx \d t \\
&\geq \frac{d_1 \alpha}{2} \int_{0}^{T} \int_{G}\left| \e \right|^2 + \left| \h \right|^2 \d\x \d t.
\end{split}
\end{align}

From the compactness of $\Gamma$ and the continuity of $\m$, we get $\m \cdot \n \geq \beta > 0$ uniformly on $\Gamma$.
Thus, the first term on the right-hand side of Equation (\ref{STABILITY_EQ_15}) can be estimated via
\begin{align}
\label{STABILITY_EQ_14}
\begin{split}
-\frac{1}{2}\int_0^T \int_\Gamma \n \cdot \m \big( \boldsymbol{\mu}\h \cdot \h + \boldsymbol{\varepsilon} \e \cdot \e \big) \d\x\d t &\leq -\frac{1}{2}\int_0^T \int_\Gamma \n \cdot \m \big( \boldsymbol{\mu}\h \cdot \h+ \boldsymbol{\varepsilon} \e \cdot \e \big) \d\x\d t  \\
&\leq -\frac{ \beta }{2}\int_0^T \int_\Gamma \boldsymbol{\mu}\h \cdot \h +  \boldsymbol{\varepsilon} \e \cdot \e  \d\x\d t.
\end{split}
\end{align}

Utilizing Young's inequality, we further get
\begin{align}
\label{STABILITY_EQ_9}
\begin{split}
\Big| \int_G \big( \m \times \boldsymbol{\varepsilon} \e(T) \big) &\cdot \big(\boldsymbol{\mu} \h(T) \big)\dx \Big| \leq \int_G | \m | \cdot \left| \boldsymbol{\varepsilon} \e(T) \right| \cdot \left| \boldsymbol{\mu} \h(T) \right| \dx \\
& \leq \sup_{\x\in G} |\m(\x)| \left|\lambda_{\max}(\boldsymbol{\varepsilon}) \lambda_{\max}(\boldsymbol{\mu})\right| \int_G \left| \e(T) \right| \cdot \left|  \h(T) \right|  \dx \\
& \leq \frac{1}{2} \sup_{\x\in G} |\m(\x)| \left|\lambda_{\max}(\boldsymbol{\varepsilon}) \lambda_{\max}(\boldsymbol{\mu})\right| \int_G \left| \e(T) \right|^2 +  \left|  \h(T) \right|^2  \dx \\
& \leq \sup_{\x\in G} |\m(\x)| \left|\lambda_{\max}(\boldsymbol{\varepsilon}) \lambda_{\max}(\boldsymbol{\mu})\right| E_{\xi}(T).
\end{split}
\end{align}
Similarly, we obtain
\begin{align}
\label{STABILITY_EQ_10}
\left| \int_G \big( \m \times \boldsymbol{\varepsilon} \e(0) \big) \cdot \big(\boldsymbol{\mu} \h(0) \big)\dx \right| & \leq \sup_{\x\in G} |\m(\x)| \left|\lambda_{\max}(\boldsymbol{\varepsilon}) \lambda_{\max}(\boldsymbol{\mu})\right| E_{\xi}(0).
\end{align}

Next, we estimate $\int_0^T \int_\Gamma (\n \times \e)\cdot(\m \times \boldsymbol{\varepsilon} \e) \d\x\d t$. By virtue of Young's inequality, we get
\begin{equation}
\label{STABILITY_EQ_12}
|(\n \times \e)\cdot(\m \times \boldsymbol{\varepsilon} \e)| \leq |\n \times \e| \cdot  |\m \times \boldsymbol{\varepsilon} \e| \leq  \frac{1}{2\delta} |\n \times \e|^2 + \frac{\delta}{2}  |\m \times \boldsymbol{\varepsilon} \e|^2
\end{equation}
Using the uniform positive definiteness of $\boldsymbol{\varepsilon}$, we further find
\begin{align*}
|\m \times \boldsymbol{\varepsilon} \e |^2 &\leq \sup_{\x \in G} |\m(\x)|^2 \cdot  |\boldsymbol{\varepsilon} \e |^2 \leq  \sup_{\x \in G} |\m(\x)|^2  \big( \lambda_{\max}(\boldsymbol{\varepsilon}) |\e| \big)^2 \\
&\leq  \sup_{\x \in G} |\m(\x)|^2  \big( \lambda_{\max}(\boldsymbol{\varepsilon})  \big)^2 \frac{1}{\alpha} \boldsymbol{\varepsilon} \e \cdot \e.
\end{align*}
Integrating the latter inequality, we get
\begin{align}
\label{STABILITY_EQ_13}
\int_0^T \int_\Gamma |\m \times \boldsymbol{\varepsilon} \e|^2 \d\x\d t \leq  \frac{1}{\alpha}  \sup_{\x \in G} |\m(\x)|^2  \big( \lambda_{\max}(\boldsymbol{\varepsilon})  \big)^2 \int_0^T \int_\Gamma   \boldsymbol{\varepsilon} \e \cdot \e \d\x\d t.
\end{align}

Using Equations (\ref{STABILITY_EQ_12}) and (\ref{STABILITY_EQ_13}), we obtain
\begin{align}
\notag
\int_0^T \int_\Gamma \big( \n \times \e \big)\cdot \big( \m \times \boldsymbol{\varepsilon} \e \big) \ \d\x\d t &\leq \int_0^T \int_\Gamma\frac{1}{2\delta} |\n \times \e|^2 + \frac{\delta}{2}  |\m \times \boldsymbol{\varepsilon} \e|^2 \ \d\x\d t \\
\label{STABILITY_EQ_17}
&\leq \frac{1}{2\delta} \int_0^T \int_\Gamma |\n \times \e|^2 \d\x \d t \\
\notag
&+ \frac{\delta}{2} \frac{1}{\alpha}  \sup_{\x \in G} |\m(\x)|^2  \big( \lambda_{\max}(\boldsymbol{\varepsilon})  \big)^2 \int_0^T \int_\Gamma   \boldsymbol{\varepsilon} \e \cdot \e \d\x\d t.
\end{align}
In the same fashion, we get
\begin{align}
\notag
\int_0^T \int_\Gamma (\n \times \h)\cdot(\m \times \boldsymbol{\mu} \h) \d\x\d t &\leq \int_0^T \int_\Gamma\frac{1}{2\delta} |\n \times \h|^2 + \frac{\delta}{2}  |\m \times \boldsymbol{\mu} \h |^2 \ \d\x\d t \\
\label{STABILITY_EQ_16}
&\leq \frac{1}{2\delta} \int_0^T \int_\Gamma |\n \times \h|^2 \d\x \d t \\
\notag
&+ \frac{\delta}{2} \frac{1}{\alpha}  \sup_{\x \in G} |\m(\x)|^2  \big( \lambda_{\max}(\boldsymbol{\mu})  \big)^2 \int_0^T \int_\Gamma   \boldsymbol{\mu} \h \cdot \h \d\x\d t.
\end{align}

Recalling the boundary condition in Equation (\ref{EQUATION_MAXWELL_BC_1}), we estimate
\begin{align}
\notag
 \frac{1}{2\delta} \int_0^T \int_\Gamma |\n \times \h|^2 \d\x \d t = & \frac{1}{2\delta} \int_0^T \int_\Gamma |\big( \gamma_1 \g(\zz{0}) + \gamma_2 \g(\zz{1}) \big) \times \n |^2 \d\x \d t \\
\notag
 \leq & \frac{1}{2\delta} \int_0^T \int_\Gamma |\big( \gamma_1 \g(\zz{0}) + \gamma_2 \g(\zz{1}) \big) |^2 \d\x \d t \\
\notag
 \leq & \frac{\max\{\gamma_1^2, \gamma_2^2\}}{\delta} \int_0^T \int_\Gamma  \g^2(\zz{0}) + \g^2(\zz{1}) \d\x \d t \\
\label{STABILITY_EQ_20}
 \leq & \frac{c_2^2 \max\{\gamma_1^2, \gamma_2^2\}}{\delta} \int_0^T \int_\Gamma  \zz{0}^2 + \zz{1}^2 \d\x \d t.
\end{align}

Combining Equations (\ref{STABILITY_EQ_15})--(\ref{STABILITY_EQ_10}) and  (\ref{STABILITY_EQ_17})--(\ref{STABILITY_EQ_20}), we deduce
\begin{align}
\notag
\frac{d_1 \alpha}{2} \int_{0}^{T} \int_{G} \left| \e \right|^2  +  \left| \h \right|^2 \d\x \d t \leq &  -\frac{ \beta }{2}\int_0^T \int_\Gamma \boldsymbol{\mu}\h \cdot \h + \boldsymbol{\varepsilon} \e \cdot \e  \d\x\d t  \\
\notag
& + \frac{1}{2\delta} \int_0^T \int_\Gamma |\n \times \e|^2 \d\x \d t \\
\notag
& +  \frac{\delta}{2} \cdot \frac{1}{\alpha}  \sup_{\x \in G} |\m(\x)|^2  \big( \lambda_{\max}(\boldsymbol{\varepsilon})  \big)^2 \int_0^T \int_\Gamma   \boldsymbol{\varepsilon} \e \cdot \e \d\x\d t \\
\notag
& + \frac{c_2^2 \max\{\gamma_1^2, \gamma_2^2\}}{\delta} \int_0^T \int_\Gamma  \zz{0}^2 + \zz{1}^2 \d\x \d t \\
\notag
& +  \frac{\delta}{2} \frac{1}{\alpha}  \sup_{\x \in G} |\m(\x)|^2  \big( \lambda_{\max}(\boldsymbol{\mu})  \big)^2 \int_0^T \int_\Gamma   \boldsymbol{\mu} \h \cdot \h \d\x\d t  \\
\notag
& + \sup_{\x\in G} |\m(\x)| \left|\lambda_{\max}(\boldsymbol{\varepsilon}) \lambda_{\max}(\boldsymbol{\mu})\right| (E_{\xi}(T)+E_{\xi}(0)).
\end{align}

Choosing $\delta > 0$ such that
\begin{equation}
\frac{\delta}{2} \frac{1}{\alpha}  \sup_{\x \in G} |\m(\x)|^2 \max\big\{ \big( \lambda_{\max}(\boldsymbol{\varepsilon})  \big)^2 , \big( \lambda_{\max}(\boldsymbol{\mu}) \big)^2 \big\}\leq \frac{\beta}{2},
\end{equation}
we arrive at
\begin{align}
\label{STABILITY_EQ_18}
\frac{d_1 \alpha}{2} \int_{0}^{T} \int_{G} \left| \e \right|^2  +  \left| \h \right|^2 \d\x \d t \leq &  \frac{1}{2\delta} \int_0^T \int_\Gamma \zz{0}^2 \d\x \d t \\
\notag
& + \frac{c_2^2 \max\{\gamma_1^2, \gamma_2^2\}}{\delta}  \int_0^T \int_\Gamma   \zz{0}^2 +  \zz{1}^2  \ \d\x\d t \\
\notag & + \sup_{\x\in G} |\m(\x)|
\left|\lambda_{\max}(\boldsymbol{\varepsilon})
\lambda_{\max}(\boldsymbol{\mu})\right|
\big(E_{\xi}(T)+E_{\xi}(0)\big).
\end{align}

There remains to estimate
\begin{equation*}
I = \int_0^T \int_{0}^{1} \int_{\Gamma} \big|\mathbf{E}(t - \tau s, \mathbf{x}) \times \boldsymbol{\nu}\big|^{2} \dx \d s \d t.
\end{equation*}
Making substitution $u = t-\tau s$ and $v = t$, we get
\begin{align}
\notag
I  &= \frac{1}{\tau}  \int_0^T \int_{v-\tau}^{v} \int_{\Gamma} \big|\mathbf{E}(u, \mathbf{x}) \times \boldsymbol{\nu}\big|^{2} \dx \d s \d v \\
\notag
&= \frac{1}{\tau} \int_{-\tau}^0 (u+\tau)\int_{\Gamma} \big|\mathbf{E}(u, \mathbf{x}) \times \boldsymbol{\nu}\big|^{2} \dx \d u
 + \frac{1}{\tau} \int_0^{T-\tau} \tau \int_{\Gamma} \big|\mathbf{E}(u, \mathbf{x}) \times \boldsymbol{\nu}\big|^{2} \dx \d u \\
\notag
&+ \frac{1}{\tau} \int_{T-\tau}^T (T- u) \int_{\Gamma} \big|\mathbf{E}(u, \mathbf{x}) \times \boldsymbol{\nu}\big|^{2} \dx \d u \\
\notag
&\leq \int_{-\tau}^0 \int_{\Gamma} \big|\mathbf{E}(u, \mathbf{x}) \times \boldsymbol{\nu}\big|^{2} \dx \d u+ \int_0^{T-\tau} \int_{\Gamma} \big|\mathbf{E}(u, \mathbf{x}) \times \boldsymbol{\nu}\big|^{2} \dx \d u \\
\notag
&+ \int_{T-\tau}^T \int_{\Gamma} \big|\mathbf{E}(u, \mathbf{x}) \times \boldsymbol{\nu}\big|^{2} \dx \d u \\
\notag
&\leq \int_0^T \int_{\Gamma} \big|\mathbf{E}(u, \mathbf{x}) \times \boldsymbol{\nu}\big|^{2} \dx \d u +  \int_0^T \int_{\Gamma} \big|\mathbf{E}(u-\tau, \mathbf{x}) \times \boldsymbol{\nu}\big|^{2} \dx \d u \\
\label{STABILITY_EQ_19}
&= \int_0^T \int_{\Gamma} \zz{0}^{2}  + \zz{1}^{2} \dx \d t
\end{align}
Now, multiplying Equation (\ref{STABILITY_EQ_19}) by $\xi \tau$ and adding the result to Equation (\ref{STABILITY_EQ_18}) divided by $d_1 \alpha$,
the claim follows with appropriate constants $c, c_T$.
\end{proof}

\begin{theorem}
    \label{THEOREM_EXPONENTIAL_STABILITY}

    Let $\mathbf{V}$ be the unique strong solution given in Theorem \ref{THEOREM_WELL_POSEDNESS}.
    Under Assumption \ref{ASSUMPTION_GEOMETRY}, if $c_1\gamma_{1} > c_2\gamma_{2}$ (i.e., the delay term is not too strong),
    there exist $C, \lambda > 0$ such that the associated energy satisfies
    \begin{equation}
        E(t) \leq C e^{-\lambda t} E(0) \text{ for } t \geq 0. \notag
    \end{equation}
\end{theorem}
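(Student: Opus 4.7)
The plan is to combine Lemmas \ref{STABILITY_ENERGY_DUMPING_INEQUALITY} and \ref{STABILITY_LEMMA_2} in the classical observability-through-damping scheme mentioned in the Appendix. The first lemma ensures that $E_\xi$ is non-increasing along strong solutions and supplies an upper bound for the integrated boundary damping in terms of the drop of $E_\xi$. The second lemma is the Rellich-multiplier observability inequality, expressing $\int_0^T E_\xi(t)\,\d t$ in terms of the endpoint energies and the same boundary damping integral. I expect their combination to yield a time-averaged smallness estimate $T E_\xi(T) \le K E_\xi(0)$ that, upon choosing $T$ large, forces a uniform contraction after one time step; exponential decay for $E$ then follows automatically since $E$ and $E_\xi$ are equivalent (they differ only by a positive constant factor on the delay component).

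Concretely, I would fix $T > 0$, set $D(T) := \int_0^T \int_\Gamma \big(\zz{0}^2 + \zz{1}^2\big)\,\d\x\,\d t$, and apply Lemma \ref{STABILITY_ENERGY_DUMPING_INEQUALITY} on $[0,T]$ to obtain both the monotonicity $E_\xi(T) \le E_\xi(0)$ and the upper bound $D(T) \le (c_1^E)^{-1}\big(E_\xi(0) - E_\xi(T)\big)$. Plugging these two into Lemma \ref{STABILITY_LEMMA_2} gives $\int_0^T E_\xi(t)\,\d t \le \big(2c + c_T/c_1^E\big)\, E_\xi(0)$, and monotonicity of $E_\xi$ then yields $T E_\xi(T) \le \int_0^T E_\xi(t)\,\d t$, so that $E_\xi(T) \le K E_\xi(0)/T$ with $K := 2c + c_T/c_1^E$ independent of the solution.

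Choosing $T_0 := 2K$, I obtain $E_\xi(T_0) \le \tfrac{1}{2} E_\xi(0)$. Because the abstract problem (\ref{EQUATION_ABSTRACT_CAUCHY_PROBLEM}) is autonomous, the same inequality holds between any two times separated by $T_0$; iterating produces $E_\xi(n T_0) \le 2^{-n} E_\xi(0)$ for every $n \in \mathbb{N}$. Interpolating via the monotonicity of $E_\xi$ on the intervals $[nT_0, (n+1)T_0]$ then yields $E_\xi(t) \le C e^{-\lambda t} E_\xi(0)$ with $\lambda := (\ln 2)/T_0$, and transferring to $E$ by the constant-ratio equivalence completes the argument for strong solutions. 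Density of $D(\mathscr{A})$ in $\mathscr{H}$ (Lemma \ref{LEMMA_DENSITY}) combined with the Lipschitz dependence on initial data provided by the contraction semigroup then extends the bound to general $\mathbf{V}^0 \in \mathscr{H}$.

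Since the two lemmas already encapsulate all the analytic work -- the monotone-operator identity on one hand, and the multiplier computation together with the change-of-variable bounding the delay integral by the boundary terms on the other -- the only remaining step is a bookkeeping argument of the type described in the Appendix. I therefore do not anticipate a genuine obstacle; the only delicate point is to ensure that the constants $c, c_T$ in Lemma \ref{STABILITY_LEMMA_2} are independent of the initial datum and of $T$ (a re-reading of that proof confirms they are), so that the one-step contraction can be iterated uniformly in time.
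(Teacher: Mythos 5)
Your argument is correct and follows essentially the same route as the paper: the paper's proof simply feeds Lemmas \ref{STABILITY_ENERGY_DUMPING_INEQUALITY} and \ref{STABILITY_LEMMA_2} into the abstract observability-through-damping result (Theorem \ref{APPENDIX_THEOREM} in the Appendix), whose proof is exactly the contraction-and-iteration bookkeeping you carry out by hand. Your version is a mild streamlining in that it uses only the upper (dissipation) half of Lemma \ref{STABILITY_ENERGY_DUMPING_INEQUALITY} together with the monotonicity of $E_{\xi}$, whereas the Appendix additionally invokes the lower bound with constant $c_2^E$; both yield the same one-step contraction $E_{\xi}(T) \leq \gamma E_{\xi}(0)$ with $\gamma < 1$ for $T$ large, followed by iteration and the equivalence of $E$ and $E_{\xi}$.
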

\begin{proof}
From Lemmas \ref{STABILITY_ENERGY_DUMPING_INEQUALITY}, \ref{STABILITY_LEMMA_2} and Theorem \ref{APPENDIX_THEOREM} in the appendix with
\begin{align*}
D(t) = \int_{\Gamma} \zz{0}^2 + \zz{1}^2\ \d\x,
\end{align*}
we get the desired inequality for $E_{\xi}(\cdot)$ in place of $E(\cdot)$.
Taking into account the equivalence of $E(\cdot)$ and $E_{\xi}(\cdot)$, the original claim follows.
\end{proof}

Due to the density of $D(\mathscr{A})$ in $\mathscr{H}$, we have:
\begin{corollary}
    The conclusions of Theorem \ref{THEOREM_EXPONENTIAL_STABILITY} remain true for mild solutions, i.e., if $\mathbf{V}^{0} \in \mathscr{H}$.
\end{corollary}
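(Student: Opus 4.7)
The plan is to deduce the corollary from Theorem \ref{THEOREM_EXPONENTIAL_STABILITY} by an approximation argument that exploits the density $\overline{D(\mathscr{A})} = \mathscr{H}$ (Lemma \ref{LEMMA_DENSITY}) together with the Lipschitz dependence of the nonlinear semigroup on its initial data, which in turn follows from maximal monotonicity of $C\operatorname{id} + \mathscr{A}$ established in Lemma \ref{LEMMA_MONOTONICITY}.

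Fix $\mathbf{V}^{0} \in \mathscr{H}$ and denote by $\mathbf{V}$ the corresponding mild solution from Theorem \ref{THEOREM_WELL_POSEDNESS}. Using Lemma \ref{LEMMA_DENSITY}, I would select a sequence $\{\mathbf{V}^{0}_{n}\} \subset D(\mathscr{A})$ with $\mathbf{V}^{0}_{n} \to \mathbf{V}^{0}$ in $\mathscr{H}$, and let $\mathbf{V}_{n}$ denote the strong solution to Equation (\ref{EQUATION_ABSTRACT_CAUCHY_PROBLEM}) launched at $\mathbf{V}^{0}_{n}$, whose existence is furnished by Theorem \ref{THEOREM_WELL_POSEDNESS}. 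The classical contractivity estimate for semigroups generated by maximal monotone perturbations of $-C\operatorname{id}$ (cf. \cite[Theorem 4.2]{Ba2010}) then yields
\[
\|\mathbf{V}_{n}(t) - \mathbf{V}(t)\|_{\mathscr{H}} \leq e^{Ct}\,\|\mathbf{V}^{0}_{n} - \mathbf{V}^{0}\|_{\mathscr{H}} \quad \text{for all } t \geq 0,
\]
so that $\mathbf{V}_{n}(t) \to \mathbf{V}(t)$ strongly in $\mathscr{H}$ for every fixed $t \geq 0$.

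Applying Theorem \ref{THEOREM_EXPONENTIAL_STABILITY} to each strong solution $\mathbf{V}_{n}$ gives
\[
\tfrac{1}{2}\|\mathbf{V}_{n}(t)\|^{2}_{\mathscr{H}} \leq C\,e^{-\lambda t}\cdot \tfrac{1}{2}\|\mathbf{V}^{0}_{n}\|^{2}_{\mathscr{H}}, \quad t \geq 0,
\]
with constants $C, \lambda > 0$ that do not depend on $n$. Since $E(t) = \tfrac{1}{2}\|\mathbf{V}(t)\|^{2}_{\mathscr{H}}$ is literally one half of the squared $\mathscr{H}$-norm, continuity of the norm under strong convergence allows one to pass to the limit $n \to \infty$ on both sides, yielding the desired exponential estimate $E(t) \leq C e^{-\lambda t} E(0)$.

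I anticipate no real obstacles. The only points worth a sanity check are (i) that the above contractivity estimate indeed holds in our nonlinear setting, which it does on the strength of Lemma \ref{LEMMA_MONOTONICITY} and standard results from \cite{Ba2010}, and (ii) that the constants in Theorem \ref{THEOREM_EXPONENTIAL_STABILITY} depend only on the data $G$, $\boldsymbol{\varepsilon}$, $\boldsymbol{\mu}$, $\gamma_{1}$, $\gamma_{2}$, $c_{1}$, $c_{2}$, $\tau$ and not on the initial condition, which is evident from the proofs of Lemmas \ref{STABILITY_ENERGY_DUMPING_INEQUALITY}--\ref{STABILITY_LEMMA_2}.
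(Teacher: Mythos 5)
Your proposal is correct and follows exactly the route the paper intends: the paper's entire justification is the phrase ``due to the density of $D(\mathscr{A})$ in $\mathscr{H}$,'' i.e., approximate $\mathbf{V}^{0}$ by data in $D(\mathscr{A})$, apply Theorem \ref{THEOREM_EXPONENTIAL_STABILITY} to the strong solutions, and pass to the limit using the Lipschitz dependence on initial data furnished by the maximal monotonicity of $C\operatorname{id}+\mathscr{A}$. You have merely written out the details (the $e^{Ct}$ growth bound for fixed $t$, the $n$-independence of the constants) that the paper leaves implicit.
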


\section*{Acknowledgment}
Financial support by the Deutsche Forschungsgemeinschaft (DFG) -- through CRC 1173 at Karlsruhe Institute of Technology, Germany -- and the University of Texas at El Paso is gratefully acknowledged.

\bibliographystyle{plain}
\bibliography{bibliography}

\appendix

\section{Proof of Exponential Stability}
\label{SECTION_APPENDIX}

\begin{theorem}
\label{APPENDIX_THEOREM}
Suppose there exist a non-negative function $D(t)$ and positive numbers $c_1^E, c_2^E, c$ and $c_T$ such that
\begin{equation}
\label{APPENDIX_INEQ_1}
- c_1^E \int_{t_1}^{t_2} D(t)\ \d t \geq E(t_2) - E(t_1) \geq - c_2^E \int_{t_1}^{t_2} D(t)\ \d t \text{ for all } t_2>t_1 \geq 0
\end{equation}
and
\begin{equation}
\label{APPENDIX_INEQ_2}
\int_{0}^{T} E (t)\ \d t \leq c (E(0) + E(T)) + c_T \int_{0}^{T} D(t)\ \d t \text{ for arbitrarily large } T.
\end{equation}
Then, there exist $C, \lambda > 0$ such that the function $E(t)$ satisfies
\begin{equation}
      E(t) \leq C e^{-\lambda t} E(0) \text{ for } t \geq 0. \notag
\end{equation}
\end{theorem}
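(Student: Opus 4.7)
The plan is to combine the dissipation inequality in (\ref{APPENDIX_INEQ_1}) with the observability-through-damping inequality in (\ref{APPENDIX_INEQ_2}) to obtain a ``one-step'' contraction $E(T^{\ast}) \le k\,E(0)$ with $k \in (0,1)$ for some sufficiently large $T^{\ast}$, and then iterate.

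First, I would note that the upper bound in (\ref{APPENDIX_INEQ_1}), together with $D \ge 0$ and $c_1^E > 0$, forces $E$ to be non-increasing; in particular $T\,E(T) \le \int_0^T E(t)\,\mathrm{d}t$ for every $T > 0$. Second, setting $t_1 = 0$, $t_2 = T$ in the left-hand inequality of (\ref{APPENDIX_INEQ_1}) yields the key absorption bound
\begin{equation*}
\int_0^T D(t)\,\mathrm{d}t \;\le\; \tfrac{1}{c_1^E}\bigl(E(0) - E(T)\bigr).
\end{equation*}
Inserting this into (\ref{APPENDIX_INEQ_2}) and applying the monotonicity of $E$ from step one gives
\begin{equation*}
T\,E(T) \;\le\; \Bigl(c + \tfrac{c_T}{c_1^E}\Bigr) E(0) + \Bigl(c - \tfrac{c_T}{c_1^E}\Bigr) E(T).
\end{equation*}
Choosing any $T^{\ast}$ strictly larger than $2c$ (or, more conservatively, $T^{\ast} > 2\max\{c,\,c_T/c_1^E\}$) isolates $E(T^{\ast})$ on the left and delivers
\begin{equation*}
E(T^{\ast}) \;\le\; k\,E(0),
\qquad
k := \frac{c + c_T/c_1^E}{T^{\ast} - c + c_T/c_1^E} \in (0,1).
\end{equation*}

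Finally, I would iterate this contraction. Because the hypotheses (\ref{APPENDIX_INEQ_1}) and (\ref{APPENDIX_INEQ_2}) are translation-invariant in the sense that, for any $t_0 \ge 0$, the time-shifted functions $\tilde E(t) := E(t + t_0)$ and $\tilde D(t) := D(t + t_0)$ satisfy the same two inequalities with identical constants (this is inherited from the autonomous character of the underlying system, as in Lemmas \ref{STABILITY_ENERGY_DUMPING_INEQUALITY} and \ref{STABILITY_LEMMA_2}), the same argument applied on each interval $[nT^{\ast}, (n+1)T^{\ast}]$ yields $E\bigl((n+1)T^{\ast}\bigr) \le k\,E(nT^{\ast})$, hence $E(nT^{\ast}) \le k^{n} E(0)$. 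Using monotonicity to fill in the intermediate times and setting $\lambda := -\log(k)/T^{\ast} > 0$, $C := 1/k$, one obtains $E(t) \le C e^{-\lambda t} E(0)$ for all $t \ge 0$.

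The main obstacle is not the algebra but rather the iteration step: nothing in the bare statement of (\ref{APPENDIX_INEQ_2}) presumes translation invariance, so one must either build this assumption in explicitly or invoke the autonomy of the evolution supplying $E$ and $D$. Everything else reduces to combining the two hypotheses linearly and picking $T^{\ast}$ large enough to dominate the constant $c$.
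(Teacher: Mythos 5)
Your argument is correct and essentially reproduces the paper's own proof: a one-step contraction $E(T)\le k\,E(0)$ with $k<1$ for $T$ large, obtained by absorbing $\int_0^T D(t)\,\mathrm{d}t$ into $\tfrac{1}{c_1^E}\bigl(E(0)-E(T)\bigr)$ via the left-hand inequality of (\ref{APPENDIX_INEQ_1}) and the monotonicity of $E$, followed by iteration over the intervals $[mT,(m+1)T]$ and a final interpolation using monotonicity. Your variant is marginally leaner in that the lower bound with $c_2^E$ is never needed (the paper uses it only to replace $E(0)$ by $E(T)+c_2^E\int_0^T D(t)\,\mathrm{d}t$ before absorbing the damping term), and your caveat about the translation invariance of (\ref{APPENDIX_INEQ_2}) in the iteration step is well taken: the paper's proof makes the same implicit time shift, which in the application is justified by the autonomy of the system underlying Lemmas \ref{STABILITY_ENERGY_DUMPING_INEQUALITY} and \ref{STABILITY_LEMMA_2}.
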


\begin{proof}
Taking $t_1 = 0$ and $t_2 = T$ in Equation (\ref{APPENDIX_INEQ_1}), we get
\begin{equation}
\label{APPENDIX_INEQ_3}
E(0)  \leq E(T) +  c_2^E \int_{0}^{T} D(t)\ \d t.
\end{equation}
Thus, from Equation (\ref{APPENDIX_INEQ_2}), we obtain
\begin{equation}
\label{APPENDIX_INEQ_4}
\int_0^T E(t)\ \d t  \leq  2c  E(T) + (c_T+cc_2^E) \int_{0}^{T} D(t)\ \d t.
\end{equation}
Now, using Equation (\ref{APPENDIX_INEQ_1}) with $t_2 = T$ and $t_1 = t$, we get
\begin{equation}
\label{APPENDIX_INEQ_5}
E(t)  \geq E(T)  +  c_1^E \int_{t}^{T} D(s)\ \d s.
\end{equation}

Integrating the latter inequality from $0$ to $T$ with respect to $t$ and taking into account Equation (\ref{APPENDIX_INEQ_4}), we arrive at
\begin{equation}
\label{APPENDIX_INEQ_6}
T E(T)  +  c_1^E  \int_0^T \int_{t}^{T} D(s)\ \d s \d t \leq  \int_0^T E(t)\ \d t \leq   2c  E(T) + (c_T+cc_2^E) \int_{0}^{T} D(t)\ \d t.
\end{equation}
Choosing $T>4c$, we have
\begin{equation}
\label{APPENDIX_INEQ_7}
\frac{T}{2} E(T)  +  c_1^E  \int_0^T \int_{t}^{T} D(s)\ \d s \d t \leq  (c_T+cc_2^E) \int_{0}^{T} D(t)\ \d t.
\end{equation}
Since $D(s)$ is non-negative, we estimate
\begin{equation}
\label{APPENDIX_INEQ_8}
\frac{T}{2} E(T)  \leq  (c_T+cc_2^E) \int_{0}^{T} D(t)\ \d t.
\end{equation}
Applying Equation (\ref{APPENDIX_INEQ_1}) with $t_1 = 0$ and $t_2 = T$ to the inequality in Equation (\ref{APPENDIX_INEQ_8}), we get
\begin{equation}
\label{APPENDIX_INEQ_9}
\frac{T}{2} E(T)  \leq  \frac{c_T+cc_2^E}{c_1^E} (E(0) - E(T)),
\end{equation}
which finally leads us to
\begin{equation}
\label{APPENDIX_INEQ_10}
\left( \frac{T}{2} + \tilde{c} \right) E(T)  \leq  \tilde{c} E(0)
\end{equation}
with $\tilde{c} = \frac{c_T+cc_2^E}{c_1^E}$.
Thus,
\begin{equation}
\label{APPENDIX_INEQ_11}
 E(T)  \leq  \gamma E(0) \text{ for } \gamma = \frac{\tilde{c}}{\tilde{c}+T/2} < 1.
\end{equation}

Using a similar argument on each of the time segments $[(m-1)T,mT]$ for $m=1,2,\dots$, we  obtain
\begin{equation}
\label{APPENDIX_INEQ_12}
 E(mT)  \leq  \gamma E((m-1)T) \leq \ldots \leq \gamma^{m} E(0),\ m = 1,2,\ldots
\end{equation}
Denoting $\lambda = - T^{-1} \ln(\gamma) > 0$, Equation (\ref{APPENDIX_INEQ_12}) rewrites as
\begin{equation}
\label{APPENDIX_INEQ_13}
 E(mT)  \leq e^{-\lambda m T} E(0),\ m = 1,2, \ldots.
\end{equation}
It easily follows from (\ref{APPENDIX_INEQ_1}) that $E(t)$ is monotone non-increasing. This leads to
\begin{equation}
\label{APPENDIX_INEQ_14}
E(t) \leq  E(mT)  \leq e^{-\lambda m T} E(0) = \frac{1}{\gamma} e^{-\lambda (m+1) T} E(0) \leq \frac{1}{\gamma} e^{-\lambda t} E(0)
\end{equation}
for arbitrary $t\in [mT, (m+1)T]$ for any $m = 1, 2, \dots$, which completes the proof.
\end{proof}

\end{document}